\pgfplotsset{compat=1.15}
\newcommand{\ip}[2]{\ensuremath{\left<#1,#2\right>}}
\newcommand{\sett}[1]{\ensuremath{\left \{ #1 \right \}}}
\newcommand{\abs}[1]{\ensuremath{\left| #1 \right| }}
\newcommand{\cu}{\kappa_{\partial\Omega,\eta}}
\newcommand{\N}{\mathbb{N}}
\newcommand{\R}{\mathbb{R}}
\newcommand{\hh}{\mathbb{H}}
\DeclareMathOperator{\im}{Im}
\DeclareMathOperator{\tr}{trace}
\DeclarePairedDelimiter\ceil{\lceil}{\rceil}
\newcommand{\rchi}{1}
\newtheorem{lemma}{Lemma}[section]
\newtheorem{theorem}[lemma]{Theorem}
\newtheorem{coro}[lemma]{Corollary}
\newtheorem{prop}[lemma]{Proposition}
\newtheorem{rem}[lemma]{Remark}
\numberwithin{equation}{section}
\newcommand{\lo}{L}
\newcommand{\newkg}{K_g}
\author{Felipe Marceca}
\address{Faculty of Mathematics \\
	University of Vienna \\
	Oskar-Morgenstern-Platz 1 \\
	A-1090 Vienna, Austria}
\email{felipe.marceca@univie.ac.at}
\author{Jos\'e Luis Romero}
\address{Faculty of Mathematics \\
	University of Vienna \\
	Oskar-Morgenstern-Platz 1 \\
	A-1090 Vienna, Austria \\and
	Acoustics Research Institute\\ Austrian Academy of Sciences\\Wohllebengasse 12-14, Vienna, 1040, Austria}
\email{jose.luis.romero@univie.ac.at}
\title[Spectral deviation of concentration operators]{Spectral deviation of concentration operators for the short-time Fourier transform}
\thanks{J. L. R. and F. M. gratefully acknowledge support from the Austrian Science Fund (FWF): Y 1199.}
\subjclass[2010]{47B35, 47G30, 42B35, 47A75, 42C40}
\keywords{Short-time Fourier transform, concentration operator, Hankel operator, eigenvalue}
\begin{document}
\begin{abstract}
Time-frequency concentration operators restrict the integral analysis-synthesis formula for the short-time Fourier transform to a given compact domain. We estimate how much the corresponding eigenvalue counting function deviates from the Lebesgue measure of the time-frequency domain. For window functions in the Gelfand-Shilov class, the bounds almost match known asymptotics, with the advantage of being effective for concrete domains and spectral thresholds. As such our estimates allow for applications where the spectral threshold depends on the geometry of the time-frequency concentration domain. We also consider window functions that decay only polynomially in time and frequency.
\end{abstract}

\maketitle

\section{Introduction}
\subsection{Spectrum of concentration operators}
The short-time Fourier transform of a function $f \in L^2(\mathbb{R}^d)$ is defined by means of a smooth and fast decaying \emph{window function} $g:\mathbb{R}^d \to \mathbb{C}$ as
\begin{align}
\label{eq_stft}
V_{g}f(z)
=\int_{\mathbb{R}^d}f(t)\overline{g(t-x)}e^{-2\pi i\xi t}dt,
\quad z=(x,\xi) \in \mathbb{R}^d \times \mathbb{R}^d.
\end{align}
The number $V_{g}f(x,\xi)$ measures the importance of the frequency $\xi$ of $f$ near $x$ and provides a form of local Fourier analysis.
Indeed, with the normalization $\lVert{g \rVert}_2=1$, the short-time Fourier transform provides the following analysis-synthesis formula:
\begin{align}\label{eq_rec}
f(t) = \int_{\mathbb{R}^{2d}} V_g f(x,\xi) \, g(t-x) e^{2 \pi i \xi  t} \,dx d\xi, \qquad t \in \mathbb{R}^d,
\end{align}
where the integral converges in quadratic mean.

\emph{Time-frequency concentration operators} are defined by restricting integration
in \eqref{eq_rec} to a compact time-frequency domain $\Omega \subset \mathbb{R}^{2d}$:
\begin{align}\label{eq_tfo}
\lo_\Omega f(t) = \int_{\Omega} V_g f(x,\xi) \, g(t-x) e^{2 \pi i \xi  t} \,dx d\xi, \qquad f \in L^2(\mathbb{R}^d).
\end{align}
Introduced in \cite{da88}, concentration operators for the short-time Fourier transform distinguish themselves from other forms of phase-space localization in that they treat the joint time-frequency variable $z=(x,\xi)$ as a single entity.

It is easy to see that $\lo_\Omega$ is positive semi-definite and contractive.  Heuristically $\lo_\Omega$ represents a projection onto the space of functions whose short-time Fourier transform is essentially localized on $\Omega$; the extent to which that intuition is correct is determined by the profile of the spectrum $\sigma(\lo_\Omega)$.

Weyl asymptotics for general time-frequency concentration operators involve dilations of a fixed domain $\Omega$ and go back to \cite{MR1310645}. In the refined version of \cite{MR1827072} they provide the following for each fixed $\delta \in (0,1)$:
\begin{align*}
\frac{\#\sett{\lambda \in \sigma(\lo_{R \cdot \Omega}): \lambda > \delta}}{\abs{R \cdot \Omega}} \longrightarrow 1,
\mbox{ as }R \longrightarrow +\infty,
\end{align*}
where $|E|$ denotes the Lebesgue measure of $E$, and eigenvalues are counted with multiplicity --- i.e., ${\#\sett{\lambda \in \sigma(\lo_{\Omega}): \lambda > \delta}} = \mathrm{trace}\big[1_{(\delta,\infty)}(\lo_\Omega)\big]$.

The number of intermediate eigenvalues $\#\sett{\lambda \in \sigma(\lo_{\Omega}): \delta < \lambda < 1- \delta}$ - dubbed \emph{plunge region} in \cite{da88} - essentially encodes the error of the approximation $\#\sett{\lambda \in \sigma(\lo_{\Omega}): \lambda > \delta} \approx \abs{\Omega}$ as it is the difference of
two such error terms.  For suitably general windows $g$ and domains $\Omega$, the following asymptotics were obtained in \cite{dMFeNo}:
\begin{align}\label{eq_A0}
0<c_{g, \delta, \Omega} \leq 
\frac{\#\sett{\lambda \in \sigma(\lo_{R \cdot \Omega}): \delta < \lambda < 1-\delta}}
{\mathcal{H}^{2d-1}\big(\partial [ R \cdot \Omega]\big) } \leq C_{g, \delta, \Omega}<\infty,
\end{align}
for all sufficiently large $R$ and sufficiently small $\delta>0$, depending on $\Omega$ and $g$. Here, $\mathcal{H}^{2d-1}$ is the $2d-1$ dimensional Hausdorff measure.
In fact, \cite{dMFeNo} derives similar asymptotics more generally for families of domains $\Omega$ where certain qualities related to curvature are kept uniform,
the family of dilations $\{R \cdot \Omega: R>0\}$ being one such example.\footnote{The quotation is not exact, as \cite{dMFeNo} uses the $2d$ dimensional measure of a neighborhood of the boundary instead of the perimeter; under the assumptions in \cite{dMFeNo} these are equivalent.}

Precise two-term {S}zeg\H{o} asymptotics for the eigenvalue counting function of a time-frequency concentration operator with Schwartz window $g$ and under increasing dilation of a $C^2$ domain $\Omega$ were derived in \cite{MR3433287}. The key quantity is
\begin{align}\label{eq_A1}
A_1(g,\partial\Omega,\delta) = C_d \cdot \int_{\partial\Omega}
\inf \Big\{\lambda \in \mathbb{R}: \int_{\{z \cdot n_u \,>\, \lambda\} } Wg(z) \,dz< \delta \Big\}
\,d\mathcal{H}^{2d-1}(u),
\end{align}
where $Wg$ is the \emph{Wigner distribution of $g$} \cite[Chapter 1]{folland} and $n_u$ is the outer unit normal of $\partial \Omega$ at $u$. The asymptotic expansion reads
\begin{align}\label{eq_A1r}
\#\sett{\lambda \in \sigma(\lo_{R \cdot \Omega}): \lambda > \delta} = 
\abs{R\cdot\Omega} + A_1(g,\partial\Omega,\delta) \cdot R^{2d-1} + o_{\delta,\Omega,g}(R^{2d-1}), \mbox{ as }R \longrightarrow +\infty.
\end{align}

\subsection{Robust versus non-robust error terms}\label{sec_quality}
The asymptotic expansion \eqref{eq_A1r} is \emph{non-robust with respect to $\delta$} in the sense that \eqref{eq_A1r} is only valid when $\delta$ is considered fixed with respect to $R$. Indeed, while the leading error term in \eqref{eq_A1r}, $A_1(g,\partial\Omega,\delta)$, depends explicitly on $\delta$, $g$ and $\Omega$, the error bound $o_{\delta,\Omega,g}(R^{2d-1})$ is not uniform on these parameters, \emph{which precludes applications where $\delta$ is allowed to vary with $R$}.

In contrast, \emph{upper bounds} for the error
$| \#\sett{\lambda \in \sigma(\lo_{\Omega}): \lambda > \delta} - \abs{\Omega} |$ that are \emph{threshold-robust} are possible
even for a fixed (non-dilated) domain. For example, if $\mathcal{H}^{2d-1}(\partial\Omega) < \infty$, comparing the first two moments of $\lo_\Omega$, as done for example in \cite[Proposition 3.3]{AbGrRo}, gives 
\begin{align}\label{eq_simple}
\big| \#\sett{\lambda \in \sigma(\lo_{\Omega}): \lambda > \delta} - \abs{\Omega} \big| \leq C_g \cdot \mathcal{H}^{2d-1}(\partial\Omega) \cdot \max\left\{\tfrac{1}{\delta}, \tfrac{1}{1-\delta}\right\}, \quad \! \delta \in (0,1),
\end{align}
where $C_g = \int_{\mathbb{R}^{2d}} |z| \abs{V_gg(z)}^2 \,dz$.

Threshold-robust spectral deviation bounds such as \eqref{eq_simple} enable applications in which $\delta$ is chosen as a function of the geometry of the domain $\Omega$. For example, Landau's method to study the discrepancy of sampling and interpolating sets
\cite{la67} relies on a formula analogous to \eqref{eq_simple}, whose most subtle applications require choosing $\delta$ proportional to a negative power of $\abs{\Omega}$; see, e.g. \cite[Proposition 1.3]{amro20}. Similarly, the quantification of the performance of randomized linear algebra methods ~\cite{MR2806637} applied to Toeplitz operators demands robust spectral bounds, and this has motivated the revisitation of the classical Landau-Widom asymptotics for the truncation of the Fourier transform \cite{israel15,KaRoDa,BoJaKa,Os}.

The purpose of this article is to provide a threshold-robust spectral deviation bound for time-frequency concentration operators \eqref{eq_tfo},
valid for a concrete (non-dilated) domain $\Omega$ and fully explicit on the threshold $\delta$, as in \eqref{eq_simple}, but with a more favorable dependence on $\delta$.

\subsection{Results}

We say that a compact domain $\Omega \subseteq \mathbb{R}^{2d}$ has \emph{regular boundary at scale} $\eta>0$ if there exists a constant $\kappa>0$ such that
\begin{align}\label{regbou}
\mathcal{H}^{2d-1}\big(\partial \Omega \cap B_{r}(z) \big)\geq \kappa \cdot r^{2d-1}, \qquad 0 < r \leq \eta, \quad z \in \partial\Omega.
\end{align}
In this case, the largest possible constant $\kappa$ is denoted
\begin{align}\label{eq_kappa}
\cu = \inf_{0 < r \leq \eta} \inf_{z \in \partial \Omega}
\frac{1}{r^{2d-1}} \cdot \mathcal{H}^{2d-1}\big(\partial \Omega \cap B_{r}(z) \big).
\end{align}
This condition means that $\partial \Omega$ satisfies the lower estimate in the definition of \emph{Ahlfors regularity} (see \cite[Definition~I.1.13]{DaSe}). Aside from Lipschitz domains, which satisfy \eqref{regbou}, boundary regularity allows for ridges --- for example, sets of the form $[0,1]^n \times E\subseteq \R^{n+2}$ where $\partial E\subseteq \R^2$ is compact and connected.

In this article we prove the following.
\begin{theorem}[Threshold-robust spectral bounds, non-dilated domains]\label{th_intro_1}
Let $\Omega\subseteq\R^{2d}$ be a compact set with regular boundary at scale $\eta >0$. Let $g \in L^2(\mathbb{R}^d)$ satisfy $\lVert g \rVert_2=1$ and the following Gelfand-Shilov-type condition with parameter $\beta \geq 1/2$: there exist $C_g,A>0$ such that for every $n \in \mathbb{N}_0$: 
\begin{align}\label{eq_gs}
\big| \, V_g g(z) \, \big| \leq C_g A^n n!^\beta (1+|z|)^{-n}, \qquad z \in \mathbb{R}^{2d}.
\end{align}

For $\delta\in(0,1)$ set $\tau=\max\left\{\tfrac{1}{\delta}, \tfrac{1}{1-\delta}\right\}$. Then
\begin{align}\label{eq_t1}
\big| \#\sett{\lambda \in \sigma(\lo_{ \Omega}): \lambda > \delta} - \abs{\Omega} \big|
\leq C'_g \cdot
\mathcal{H}^{2d-1}(\partial \Omega) \cdot (\log \tau)^{\beta } \left(1+\frac{(\log \tau)^{(2d-1)\beta}}{\eta^{2d-1} }\right)
 \frac{	\log\big[\log(\tau)+1\big]}{\cu },
\end{align}
where $C'_g  = C_g^{1/2} \cdot A^{3d+2} \cdot C_d^\beta$ and $C_d$ depends only on $d$. 
\end{theorem}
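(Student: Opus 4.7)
The plan is to combine a polynomial approximation of the step function $1_{(\delta,1]}$ with explicit trace estimates for mixed powers of $\lo_\Omega$ localized near $\partial\Omega$. The starting point is the identity
$$\#\sett{\lambda \in \sigma(\lo_\Omega): \lambda > \delta} - \abs{\Omega} = \sum_i \bigl(1_{(\delta,1]}(\lambda_i) - \lambda_i\bigr),$$
valid since $\mathrm{trace}(\lo_\Omega) = \|g\|_2^2 \, \abs{\Omega} = \abs{\Omega}$. On $[0,1]$, the function $\psi_\delta(\lambda) := |1_{(\delta,1]}(\lambda) - \lambda|$ equals $\lambda$ on $[0,\delta]$ and $1-\lambda$ on $(\delta,1]$, vanishes at the endpoints, and has a single jump of size at most $1$ at $\delta$. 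I would construct a polynomial $p_N$ of degree $N$ with $p_N(0)=0$ and $p_N \geq \psi_\delta$ on $[0,1]$, admitting an explicit decomposition $p_N(\lambda) = \sum_{j,k\geq 1,\; j+k \leq N} a_{j,k}\,\lambda^j(1-\lambda)^k$ with the coefficients tuned so that $\sum|a_{j,k}|$ remains polylogarithmic. The subsequent bound is $\sum_i \psi_\delta(\lambda_i) \leq \mathrm{trace}[p_N(\lo_\Omega)]$.

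The central technical step is to estimate $\mathrm{trace}[\lo_\Omega^j(I-\lo_\Omega)^k]$ with sharp dependence on $m:=j+k$. Using $V_g^* V_g = I$ (so that $I - \lo_\Omega = V_g^* M_{\Omega^c} V_g$) and cycling, one obtains
$$\mathrm{trace}\bigl[\lo_\Omega^j(I-\lo_\Omega)^k\bigr] = \mathrm{trace}\bigl[(PM_\Omega)^j(PM_{\Omega^c})^k\bigr],$$
where $P = V_g V_g^*$ projects onto the STFT range and its kernel satisfies $|K_P(z,z')| = |V_g g(z-z')|$. This expresses the trace as a closed-loop integral over $m$ points that alternate between $\Omega$ and $\Omega^c$, weighted by nearest-neighbor factors $|V_g g|$; the loop is thus pinched across $\partial\Omega$ at least twice. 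The Gelfand-Shilov bound \eqref{eq_gs} yields subexponential decay $|V_g g(z)| \lesssim \exp(-c|z|^{1/\beta})$, so the effective integration domain is a tubular neighborhood of $\partial\Omega$ of thickness $\sim m^{\beta}$. Combining with a layered covering argument for $\partial\Omega$ based on the Ahlfors lower regularity \eqref{regbou} at scales up to $\eta$, I expect an estimate of the shape
$$\mathrm{trace}\bigl[\lo_\Omega^j(I-\lo_\Omega)^k\bigr] \lesssim \frac{\mathcal{H}^{2d-1}(\partial\Omega)}{\cu}\cdot B^{m}\,m!^{\beta}\,\Bigl(1 + \frac{m^{(2d-1)\beta}}{\eta^{2d-1}}\Bigr).$$

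Finally, I would substitute these bounds into $\mathrm{trace}[p_N(\lo_\Omega)]$ and optimize $N$. The factorial growth $m!^{\beta}$ balances against the degree needed to approximate the step function on $[0,1]$ to error $\sim 1/\tau$ at the Gelfand-Shilov rate, which is exactly $N \sim (\log\tau)^{\beta}$; this substitution produces the polylogarithmic factors $(\log\tau)^{\beta}$ and $(\log\tau)^{(2d-1)\beta}/\eta^{2d-1}$ in \eqref{eq_t1}, while the extra $\log[\log\tau+1]$ arises from the overshoot needed to dominate $\psi_\delta$ near the jump. The principal difficulty lies in the mixed-moment trace estimate: one must control the $m$-fold boundary-crossing integrals using only the \emph{lower} Ahlfors bound \eqref{regbou} (no upper regularity), which demands a careful decomposition of $\partial\Omega$ into patches of diameter $\lesssim \eta$, a geometric covering argument that extracts the $\cu^{-1}$ factor, and a delicate bookkeeping that converts the subexponential decay of $V_g g$ into explicit polynomial-in-$m$ constants across the $m$ edges of the loop.
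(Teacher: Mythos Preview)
Your approach differs substantially from the paper's. The paper does not use polynomial majorants or integer mixed moments. Instead it relies on the single pointwise inequality
\[
\bigl|1_{(\delta,1]}(t)-t\bigr| \le \left(\frac{t(1-t)}{\delta(1-\delta)}\right)^{p/2}, \qquad t\in[0,1],\quad 0<p\le 2,
\]
valid by concavity of $x\mapsto x^{p/2}$, which immediately gives
\[
\big|\#\{\lambda>\delta\}-|\Omega|\big|\le (\delta(1-\delta))^{-p/2}\,\|H\|_p^p,
\]
where $H=(I-P)\,1_\Omega P$ is the Hankel operator (so $H^*H=T-T^2$). The technical core is then a Schatten bound for $\|H\|_p^p$ with explicit $p$-dependence: one uses operator convexity to write $\|H\|_p^p\le\int\|HK_z\|_2^p\,dz$, expresses $\|HK_z\|_2$ as a single boundary-crossing integral, and controls the resulting weight via the coarea formula together with the level-set estimate $\mathcal{H}^{2d-1}(\{d(\cdot,\partial\Omega)=r\})\lesssim \cu^{-1}\mathcal{H}^{2d-1}(\partial\Omega)(1+r/\eta)^{2d-1}$ (this is where Ahlfors lower regularity enters). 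The Gelfand--Shilov hypothesis is used only to bound $\|(1+|z|)^{N}V_gg\|_\infty$ with $N\sim 1/p$, and the final choice is $p=1/\log\tau$.

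Your proposal has a genuine gap in the mixed-moment step. The claimed estimate
\[
\mathrm{trace}\bigl[\lo_\Omega^{\,j}(I-\lo_\Omega)^k\bigr]\lesssim \frac{\mathcal{H}^{2d-1}(\partial\Omega)}{\cu}\,B^{m}\,m!^{\beta}\,\Bigl(1+\frac{m^{(2d-1)\beta}}{\eta^{2d-1}}\Bigr)
\]
cannot be correct as a mechanism: since $0\le\lambda_i\le 1$, one has $\lambda_i^{\,j}(1-\lambda_i)^k\le\lambda_i(1-\lambda_i)$ for all $j,k\ge 1$, so the left side is bounded by $\mathrm{trace}(T-T^2)\lesssim \mathcal{H}^{2d-1}(\partial\Omega)$ \emph{uniformly in $m$}, with no $\cu$ and no factorial. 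A bound that grows in $m$ is useless for your optimization; what you would actually need is decay in $m$, so that the large coefficients forced on any polynomial majorant of the discontinuous $\psi_\delta$ can be absorbed. Such decay does not come from the loop-integral picture you sketch either: the closed $m$-loop crosses $\partial\Omega$ only twice (between positions $j,j{+}1$ and $m,1$), while the remaining $m-2$ free integrations contribute at best a bounded factor $\|V_gg\|_1^{\,m-2}$. Consequently the balancing ``$m!^{\beta}$ versus $N\sim(\log\tau)^{\beta}$'' has no basis, and the polynomial construction itself---a degree-$N$ majorant of a step function with polylogarithmic coefficient sum---is left entirely unspecified (note that the single power $(\lambda(1-\lambda))^n$ fails to dominate $\psi_\delta$ once $n\ge 2$ and $\delta$ is small). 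The paper sidesteps all of this by working with the \emph{fractional} exponent $p/2\in(0,1]$: the concavity inequality above replaces your polynomial approximation, and the small-$p$ Schatten estimate for a single Hankel operator replaces your hierarchy of $m$-point loop integrals.
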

\begin{rem}
Condition \eqref{eq_gs} is satisfied whenever $g$ belongs to the \emph{Gelfand-Shilov class} $\mathcal{S}^{\beta,\beta}$, i.e., if there exist constants $a,b,C>0$ such that the following decay and smoothness conditions hold:
\begin{align}\label{eq_gsc}
|g(x)| \leq C e^{-a |x|^{1/\beta}}, \qquad |\hat{g}(\xi)| \leq C e^{-b |\xi|^{1/\beta}}, \qquad x,\xi \in \mathbb{R}^d;
\end{align}
see \cite{MR3469849, ChChKi}, \cite[Theorem~3.2]{MR1732755}, \cite[Corollary 3.11 and Proposition 3.12]{MR2027858},
\cite[Proposition 3.3]{MR1916857} and \cite[Theorem~3.1]{YaOb}. Conversely,
\eqref{eq_gs} implies that $g\in \mathcal{S}^{\beta',\beta'}$ for every $\beta'>\beta$. 
\end{rem}

The restriction to $\beta \geq 1/2$ in \eqref{eq_gs} is natural, as a version of Hardy's uncertainty principle \cite{MR1802767} precludes the case $\beta < 1/2$.
As we show in Section \ref{sec_disc}, the estimate in Theorem \ref{th_intro_1} is \emph{sharp in the spectral threshold $\delta$ up to $\log\log$ factors}. In particular, the exponents $1$ and $2d-1$ on $(\log \tau)^\beta$ in \eqref{eq_t1} cannot be improved.

\subsection{The dilation regime}\label{sec_dil}
To better appreciate Theorem \ref{th_intro_1}, let us specialize it to the dilation regime. Note that if $\Omega$ has regular boundary at scale $\eta$ with parameter $\cu$, then $R\cdot\Omega$ has regular boundary at scale $R\cdot \eta$ with parameter $\cu$. Thus, Theorem \ref{th_intro_1} implies the following.

\begin{coro}[Threshold-robust spectral bounds, the dilation regime]\label{coro_dil}
Let $\Omega\subseteq\R^{2d}$ and $g \in L^2(\mathbb{R}^d)$ satisfy the conditions of Theorem~\ref{th_intro_1}. Then, with the same notation, for all $R>0$,
\begin{align}\label{eq_good}
\begin{aligned}
	\big|\#&\sett{\lambda \in \sigma(\lo_{R \cdot \Omega}): \lambda > \delta} -
\abs{R\cdot\Omega}\big| 
\\ &\le C'_g \cdot
\mathcal{H}^{2d-1}\big(\partial [R\cdot\Omega]\big) \cdot (\log \tau)^{\beta } \left(1+\frac{(\log \tau)^{(2d-1)\beta}}{(R\cdot\eta)^{2d-1} }\right) 
\frac{	\log\big[\log(\tau)+1\big]}{\cu }.
\end{aligned}
\end{align}
\end{coro}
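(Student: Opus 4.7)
The plan is essentially to reduce the corollary to a direct application of Theorem~\ref{th_intro_1} to the dilated domain $R \cdot \Omega$, after verifying the scaling behavior of the regular-boundary condition.

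First I would verify the claim, stated in the corollary without proof, that if $\Omega$ has regular boundary at scale $\eta$ with constant $\cu$, then $R\cdot\Omega$ has regular boundary at scale $R\eta$ with the same constant. For this, note that $\partial(R\cdot\Omega) = R\cdot\partial\Omega$, and for any $z' = Rz \in \partial(R\cdot\Omega)$ and any $0 < r' \le R\eta$ we have, by homogeneity of the $(2d-1)$-dimensional Hausdorff measure,
\begin{align*}
\mathcal{H}^{2d-1}\bigl(\partial(R\cdot\Omega) \cap B_{r'}(z')\bigr)
= R^{2d-1} \cdot \mathcal{H}^{2d-1}\bigl(\partial\Omega \cap B_{r'/R}(z)\bigr).
\end{align*}
Since $0 < r'/R \le \eta$, applying \eqref{regbou} for $\Omega$ bounds the right-hand side below by $R^{2d-1}\cdot \cu \cdot (r'/R)^{2d-1} = \cu \cdot (r')^{2d-1}$, so the regularity constant is preserved and the scale becomes $R\eta$. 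Taking the infimum in \eqref{eq_kappa} for $R\cdot\Omega$ then yields $\kappa_{\partial[R\cdot\Omega], R\eta} \ge \cu$.

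Next I would apply Theorem~\ref{th_intro_1} verbatim to the domain $R \cdot \Omega$ with the scale parameter $R\eta$ in place of $\eta$. The hypotheses on $g$ are unchanged, and the hypothesis on the domain was just verified. This gives the bound
\begin{align*}
\big|\#\sett{\lambda \in \sigma(\lo_{R\cdot\Omega}): \lambda > \delta} - |R\cdot\Omega|\big|
\le C'_g \cdot \mathcal{H}^{2d-1}\bigl(\partial[R\cdot\Omega]\bigr) \cdot (\log\tau)^\beta \left(1+\frac{(\log\tau)^{(2d-1)\beta}}{(R\eta)^{2d-1}}\right) \frac{\log[\log(\tau)+1]}{\kappa_{\partial[R\cdot\Omega], R\eta}},
\end{align*}
and replacing $\kappa_{\partial[R\cdot\Omega], R\eta}$ by the smaller quantity $\cu$ in the denominator preserves the inequality and matches \eqref{eq_good} exactly.

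There is no real obstacle here: the corollary is a formal consequence of the theorem together with the observation that regularity of the boundary is scale-covariant in the precise sense that the constant $\cu$ is invariant while the admissible scale multiplies by $R$. The only point requiring care is making sure one does not inadvertently lose a factor of $R$ when rescaling $\eta$ in the denominator of the error estimate, which is exactly why $\eta$ is replaced by $R\eta$ and not by $\eta$ itself in \eqref{eq_good}.
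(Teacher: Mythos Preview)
Your proposal is correct and follows exactly the paper's approach: the paper simply notes that if $\Omega$ has regular boundary at scale $\eta$ with parameter $\cu$, then $R\cdot\Omega$ has regular boundary at scale $R\eta$ with the same parameter, and then applies Theorem~\ref{th_intro_1} directly. Your write-up merely spells out the scaling verification in more detail; the only minor remark is that the scaling argument in fact gives equality $\kappa_{\partial[R\cdot\Omega],R\eta}=\cu$ (apply your computation in both directions), though the inequality you establish already suffices.
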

Let us compare Corollary \ref{coro_dil} with the spectral deviation estimate that follows from the asymptotics in \cite{MR3433287}.
Suppose for concreteness that $g$ satisfies the Gelfand-Shilov condition \eqref{eq_gsc}. Then, as shown in \cite{MR1732755}, there are constants $c,C'>0$ such that
\[|Wg(z)|\le C' e^{-c|z|^{1/\beta}}.\]
Inspection of \eqref{eq_A1} --- say, for $\delta < 1/2$ --- gives
\begin{align}\label{eq_ol1a}
|A_1(g,\partial\Omega,\delta)| \leq C_g \cdot \mathcal{H}^{2d-1}(\partial\Omega) \cdot [\log(1/\delta)]^\beta.
\end{align}
The asymptotic expansion \eqref{eq_A1r} now shows that there exist $R_0=R_0(\Omega,\delta,g)$ such that
\begin{align}\label{eq_bad}
\big|\#\sett{\lambda \in \sigma(\lo_{R \cdot \Omega}): \lambda > \delta} -
\abs{R\cdot\Omega}\big| 
\le C_{g,\Omega} \cdot
R^{2d-1} \cdot [\log (1/\delta)]^{\beta },
\qquad R \geq R_0(\Omega,g,\delta).
\end{align}
No information on the dependence of $R_0$ on $\delta$ is available and applications of \eqref{eq_bad} where $R$ is taken as a function of $\delta$ (or vice-versa) are thus not possible.

In contrast, \eqref{eq_good} --- still with $\delta<1/2$ --- provides the threshold-robust bound:
\begin{align}\label{eq_better}
\begin{aligned}
&\big|\#\sett{\lambda \in \sigma(\lo_{R \cdot \Omega}): \lambda > \delta} -
\abs{R\cdot\Omega}\big| 
\\
&\qquad\le C_{g,\Omega} \cdot
R^{2d-1} \cdot [\log (1/\delta)]^{\beta } \cdot \log\big[\log(1/\delta)+1\big], \qquad
R \geq \frac{[\log (1/\delta)]^{\beta}}{\eta},
\end{aligned}
\end{align}
and allows, for example, for
$R \rightarrow \infty$, while $\delta=R^{-s}$, with $s>0$.

While the dependence of \eqref{eq_better} on $\delta$ almost matches the one in the less quantitative estimate \eqref{eq_bad}, one may wonder if the comparison is fair, since \eqref{eq_bad} was derived by means of the upper bound \eqref{eq_ol1a}.
As it turns out, the dependence of \eqref{eq_better} in $\delta$ is indeed best possible up to $\log\log$ factors. This almost sharpness of Corollary \ref{coro_dil}  is discussed in Section \ref{sec_disc} for different regimes of $R$ and $\delta$ by testing the bound on explicit examples.

\subsection{Polynomial time-frequency decay}
We also provide a variant of Theorem \ref{th_intro_1} for window functions with only polynomial time-frequency decay. 

\begin{theorem}\label{th_2}
	Let $\Omega\subseteq\R^{2d}$ be a compact set with regular boundary at scale $\eta \in(0,1]$. 
	Let $g \in L^2(\mathbb{R}^d)$ be such that $\lVert g \rVert_2=1$,
	and, for some $s\ge 1$,
	\begin{align}\label{eq_mod}
	C_g :=  \int_{\mathbb{R}^{2d}} (1+|z|)^{s} |V_g g(z)|^2 \,dz <\infty.
	\end{align}
	
	For $\delta\in(0,1)$ set $\tau=\max\left\{\tfrac{1}{\delta}, \tfrac{1}{1-\delta}\right\}$. Then
	\begin{align}\label{eq_tpoli}
	\big| \#\sett{\lambda \in \sigma(\lo_{ \Omega}): \lambda > \delta} - \abs{\Omega} \big|
	\le C'_g
	\cdot  \mathcal{H}^{2d-1}(\partial \Omega) \cdot 
	\tau^{\frac{2d}{2d+s-1}} \cdot \left(\frac{\log (C_g \tau)}{\cu \cdot \eta^{2d-1} }\right)^{\frac{s-1}{2d+s-1}} ,
	\end{align}
	where $C'_g = C_{d} \cdot C_g^{\frac{2d}{2d+s-1}}$, and $C_d$ depends only on $d$.
\end{theorem}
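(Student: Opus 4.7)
My plan is to follow the strategy of Theorem~\ref{th_intro_1}, replacing its pointwise Gelfand--Shilov input with the weighted $L^2$ bound \eqref{eq_mod}. The basic reduction is: whenever $P$ is a polynomial on $[0,1]$ with $P(x)\ge\abs{1_{(\delta,1]}(x)-x}$, one has $\abs{\#\sett{\lambda\in\sigma(\lo_\Omega):\lambda>\delta}-\abs{\Omega}}\le\tr(P(\lo_\Omega))$. The naive choice $P(x)=\tau\,x(1-x)$ gives
\[
\abs{\#\sett{\lambda\in\sigma(\lo_\Omega):\lambda>\delta}-\abs{\Omega}} \;\le\; \tau\int_{\R^{2d}}\abs{V_gg(u)}^2\,\abs{\Omega\setminus(\Omega+u)}\,du,
\]
which already recovers the linear-in-$\tau$ bound \eqref{eq_simple}. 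The target is to improve the prefactor $\tau$ to $\tau^{2d/(2d+s-1)}$ (up to a log factor) by using a polynomial of higher degree, of the form $P_N(x)=\sum_{k\le N}c_k[x(1-x)]^k$ with uniformly controlled coefficients, and by tracking $V_gg$ across a range of scales.

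Two technical ingredients drive the improvement. Geometrically, the regularity condition \eqref{regbou}, via a Vitali-type covering of $\partial\Omega$ by boundary balls of radius $\le\eta$, yields a symmetric-difference estimate $\abs{\Omega\setminus(\Omega+u)}\lesssim\abs{u}\cdot\mathcal{H}^{2d-1}(\partial\Omega)/\cu$ for $\abs{u}\le\eta$, with a stacked variant at larger $\abs{u}$. Analytically, \eqref{eq_mod} provides the $L^2$-tail bound $\int_{\abs{u}>R}\abs{V_gg(u)}^2\,du\le C_gR^{-s}$ at any scale $R$. Splitting the $u$-integral at a cutoff $R$ and summing dyadically over $\abs{u}>R$ produces two competing contributions: a near-diagonal term growing as $R\cdot\mathcal{H}^{2d-1}(\partial\Omega)/\cu$ and a tail term decaying as $C_gR^{-(s-1)}$, with a $\log(C_g\tau)$ factor coming from the dyadic sum. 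The higher-moment traces $\tr([\lo_\Omega(I-\lo_\Omega)]^k)$ expand into $k$-fold convolutions of $\abs{V_gg}^2$ restricted to alternating $\Omega/\Omega^c$ indicators, so the same machinery applies to each.

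The final step balances the cutoff scale $R$ (equivalently, the polynomial degree $N$) against $\tau$: setting $R\sim[\tau\,\cu\,\eta^{2d-1}/\log(C_g\tau)]^{1/(2d+s-1)}$ equalizes the two competing terms and produces the exponent $2d/(2d+s-1)$ on $\tau$, together with the factor $\bigl(\log(C_g\tau)/(\cu\,\eta^{2d-1})\bigr)^{(s-1)/(2d+s-1)}$ in \eqref{eq_tpoli}. The main obstacle is that \eqref{eq_mod} provides only weighted $L^2$ control on $V_gg$, not pointwise decay: every iterated trace estimate must be routed through Hilbert--Schmidt and Schur-type norms rather than direct pointwise manipulations. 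This complicates the bound on higher moments and demands careful bookkeeping so that the geometric factor $\cu\,\eta^{2d-1}$ emerges with the sharp exponent $(s-1)/(2d+s-1)$ while the remaining constants depend only on $d$.
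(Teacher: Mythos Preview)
Your plan has a structural gap. Raising $x(1-x)$ to higher integer powers moves the exponent on $\tau$ in the \emph{wrong direction}. If $P(x)=c_k\,(x(1-x))^k$ is to dominate $|1_{(\delta,1]}(x)-x|$ at $x=\delta$, you are forced to take $c_k\ge(\delta(1-\delta))^{-k}\asymp\tau^k$, and then
\[
\tr\bigl(P(\lo_\Omega)\bigr)=c_k\,\|H\|_{2k}^{2k}\ge \tau^k\cdot\|H\|_{2k}^{2k}.
\]
Under the polynomial hypothesis \eqref{eq_mod} the singular values of $H$ decay only polynomially, so $\|H\|_{2k}^{2k}$ does not decay geometrically in $k$; the product blows up with $k$, and no choice of $N$ (nor any positive combination $\sum_{k\le N}c_k(x(1-x))^k$ with ``uniformly controlled coefficients'') can push the exponent on $\tau$ below $1$. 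Your spatial cutoff at scale $R$ does not rescue this either: in the $k=1$ term the near-diagonal contribution $\tau\int_{|u|\le R}|u|\,|V_gg(u)|^2\,du\cdot\mathcal{H}^{2d-1}(\partial\Omega)$ is bounded by the full first moment, independently of $R$, so there is nothing to balance against the tail.

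The paper obtains the sublinear exponent by going to \emph{fractional} powers. It uses Lemma~\ref{lemaautoval} with $0<p\le 2$, i.e.\ the concave majorant $(t(1-t)/(\delta(1-\delta)))^{p/2}$, and then bounds $\|H\|_p^p=\tr\bigl((T-T^2)^{p/2}\bigr)$ via Proposition~\ref{prop1}: a Berezin-type inequality $\langle (H^*H)^{p/2}K_z,K_z\rangle\le\|HK_z\|_2^{p}$ (Lemma~\ref{lemma_p}) reduces to estimating $\int\|HK_z\|_2^{p}\,dz$, and the geometry enters through the coarea formula together with the level-set bound of Proposition~\ref{prop_ca} (this is where $\cu$ and $\eta$ appear, not through a Vitali covering or symmetric-difference estimate). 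The parameter $p$ is then chosen as $p=2(2d+\alpha)/(2d+\alpha+s-1)$, so that the weight exponent in \eqref{eq_p2} equals exactly $s/2$ and the integral collapses to $C_g$; a final logarithmic optimization in $\alpha$ gives \eqref{eq_tpoli}. The step you are missing is precisely this fractional Schatten estimate; integer-moment traces cannot substitute for it.
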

Note that for $s=1$, \eqref{eq_tpoli} recovers \eqref{eq_simple} (with a larger constant $C'_g$). As $s$ grows, the dependence on $\tau$ becomes milder while the geometric constant $\cu$ becomes more relevant.
\begin{rem}\label{rem_m1s}
The hypothesis
\eqref{eq_mod} is satisfied, for example, if $g$ belongs to the modulation space
$M^1_{s}(\mathbb{R}^d)$ \cite{benyimodulation}; see Section \ref{sec_m1s}.
\end{rem}

\subsection{Some consequences}
We first note the following.
\begin{rem}
	Under the hypotheses of Theorem \ref{th_intro_1} or \ref{th_2}, the error bounds \eqref{eq_t1} or \eqref{eq_tpoli} also apply to the plunge region
	\begin{align*}
	\#\sett{\lambda: \delta < \lambda \leq 1-\delta} = 
	\left(
	\#\sett{\lambda: \lambda > \delta} - \abs{\Omega}
	\right)
	-
	\left(\#\sett{\lambda: \lambda > 1-\delta} - \abs{\Omega}\right).
	\end{align*}
\end{rem}

Second, spectral deviation bounds can of course be reformulated as asymptotics for individual eigenvalues. We now spell out the details.

We let $\{\lambda_k: k \geq 1\}$ be the decreasing rearrangement of the eigenvalues of $\lo_{ \Omega}$ counting multiplicities.
According to \eqref{eq_t1} and \eqref{eq_tpoli}, the eigenvalues are expected to transition from about 1 to about 0 around the index
$A_\Omega := \ceil{\abs{\Omega}}$ (smallest integer $\geq \abs{\Omega}$). Theorem \ref{th_intro_1} yields the following estimates.

\begin{coro}\label{coro_1}
	Let $\Omega\subseteq\R^{2d}$ and $g \in L^2(\mathbb{R}^d)$ satisfy the conditions of Theorem~\ref{th_intro_1} with $\eta\in(0,1]$. Using the theorem's notation write
	\[\gamma=
	\frac{2	C'_g}
		{\cu \cdot \eta^{2d-1}} \cdot \mathcal{H}^{2d-1}(\partial \Omega).\]
	Then the following hold.
	\begin{enumerate}[label=(\roman*),ref=(\roman*)]
		\item\label{coro_1i} For every $k=A_\Omega+\gamma h\in\N$ with $h\ge 1$,
		\begin{align}
		\lambda_k &\le e^{-\left(\frac{h}{e(1+\log h)}\right)^{1/{2d\beta}}}. \label{eqeig1}
		\end{align}
		
		\item\label{coro_1ii} For every $k=A_\Omega-\gamma h\in\N$ with $h\ge 1$,
		\begin{align}
		\lambda_k &\ge 1- e^{-\left(\frac{h}{e(1+\log h)}\right)^{1/{2d\beta}}}. \label{eqeig2}
		\end{align}
	\end{enumerate}
\end{coro}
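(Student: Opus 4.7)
The plan is to invoke Theorem~\ref{th_intro_1} at a suitably small spectral threshold $\delta_*$ and then to translate the counting-function estimate into bounds on the individual eigenvalues. Writing $N(\delta) := \#\{\lambda \in \sigma(L_\Omega): \lambda > \delta\}$, the monotone ordering of the $\lambda_k$ gives $\lambda_k \le \delta \Leftrightarrow N(\delta) \le k-1$ and $\lambda_k > 1-\delta \Leftrightarrow N(1-\delta) \ge k$. So part~\ref{coro_1i} reduces to verifying $N(\delta_*) < k = A_\Omega + \gamma h$, and part~\ref{coro_1ii} to verifying $N(1-\delta_*) \ge k = A_\Omega - \gamma h$, with $\delta_*$ equal to the right-hand side of \eqref{eqeig1}.

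The first step is to simplify the error bound from \eqref{eq_t1}. Under $\eta \le 1$ and (by restricting to $\tau$ large enough) $\log\tau \ge 1$, one has $1 + (\log\tau)^{(2d-1)\beta}/\eta^{2d-1} \le 2(\log\tau)^{(2d-1)\beta}/\eta^{2d-1}$, and the resulting prefactor $2 C'_g\,\mathcal{H}^{2d-1}(\partial\Omega)/(\cu\cdot \eta^{2d-1})$ is precisely $\gamma$. Hence
\[
E(\tau) := \bigl| N(\delta) - |\Omega| \bigr| \le \gamma \cdot (\log\tau)^{2d\beta} \cdot \log\!\bigl[\log\tau + 1\bigr].
\]

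Next I verify that the choice $\delta_* := \exp(-L)$ with $L := \bigl(h/(e(1+\log h))\bigr)^{1/(2d\beta)}$ satisfies $E(1/\delta_*) < \gamma h$. Indeed $L^{2d\beta} = h/(e(1+\log h))$, and since $\beta \ge 1/2$ and $d \ge 1$ force $2d\beta \ge 1$, we have $L \le h^{1/(2d\beta)} \le h$, hence $\log(L+1) \le \log(2h) < e(1+\log h)$. Multiplying,
\[
E(1/\delta_*) \le \gamma\cdot L^{2d\beta}\cdot \log(L+1) < \gamma\cdot \frac{h}{e(1+\log h)}\cdot e(1+\log h) = \gamma h,
\]
provided we are in the regime $\delta_* \le 1/2$, which guarantees $\tau = 1/\delta_*$ for both thresholds $\delta_*$ and $1-\delta_*$ in Theorem~\ref{th_intro_1}.

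With these estimates the conclusion is short. For \ref{coro_1i}: $N(\delta_*) \le |\Omega| + E(1/\delta_*) < |\Omega| + \gamma h \le A_\Omega + \gamma h = k$, and integrality of $N(\delta_*)$ gives $N(\delta_*) \le k-1$, i.e., $\lambda_k \le \delta_*$. For \ref{coro_1ii}: using $A_\Omega < |\Omega| + 1$, $N(1-\delta_*) \ge |\Omega| - E(1/\delta_*) > |\Omega| - \gamma h > (A_\Omega - 1) - \gamma h = k - 1$, and integrality gives $N(1-\delta_*) \ge k$, i.e., $\lambda_k > 1 - \delta_*$. The main technical point is the algebraic verification that the concrete $\delta_*$ yields $E \le \gamma h$; the small-$h$ degeneracies in which $\delta_* > 1/2$ can be absorbed, since in that regime the claimed eigenvalue bounds become trivial (as $\lambda_k \in [0,1]$ automatically).
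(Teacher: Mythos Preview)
Your approach is essentially the contrapositive of the paper's: rather than applying Theorem~\ref{th_intro_1} at $\delta \nearrow \lambda_k$ and then inverting the resulting inequality $h \le (\log(1/\lambda_k))^{2d\beta}\log[\log(1/\lambda_k)+1]$ (which the paper does via $\log(x+1) \le x^\varepsilon/\varepsilon$ with $\varepsilon = 1/(1+\log h)$), you fix the target threshold $\delta_* = e^{-L}$ at the outset and verify the counting inequality directly. The key computation $L^{2d\beta}\log(L+1) < h$ is correct and arguably cleaner than the paper's inversion.

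There is, however, a real gap in your last sentence. The claim that the bounds become trivial when $\delta_* > 1/2$ is false: $\lambda_k \le \delta_*$ with $\delta_* \in (1/2,1)$ is \emph{not} a consequence of $\lambda_k \in [0,1]$, and this regime is non-empty (e.g.\ at $h=1$, $2d\beta=1$ one gets $\delta_* = e^{-1/e} \approx 0.69$). The same issue affects your assumption $\log\tau \ge 1$, which fails whenever $L<1$. The paper closes exactly this gap with an auxiliary trace estimate (Lemma~\ref{lemplunge}), which compares $\tr(L_\Omega)$ and $\tr(L_\Omega^2)$ to show that already $\lambda_k \le 1/2$ whenever $k \ge A_\Omega + K_g\,\mathcal{H}^{2d-1}(\partial\Omega)$, where $K_g := 2\int |z|\,|V_gg(z)|^2\,dz$; one then checks $K_g\,\mathcal{H}^{2d-1}(\partial\Omega) \le \gamma$, so every admissible $k$ is covered. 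With $\lambda_k \le 1/2$ in hand the case $\delta_* \ge 1/2$ is immediate, and for $\log 2 \le L < 1$ one reads off directly from \eqref{eq_t1} that the error is below $\gamma \le \gamma h$. Your direct-substitution route is therefore a valid alternative, but it needs this lemma (or an equivalent device) to handle small~$h$.
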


Similarly, for windows with polynomial time-frequency decay, Theorem \ref{th_2} gives the following.

\begin{coro}\label{coro_2}
	Let $\Omega\subseteq\R^{2d}$ and $g \in L^2(\mathbb{R}^d)$ satisfy the conditions of Theorem~\ref{th_2}. Using the theorem's notation write
	\begin{align}\label{eq_gamma}
	\gamma=
		C_g' \cdot \left(\cu \cdot \eta^{2d-1}\right)^{-\frac{s-1}{2d+s-1}} \cdot
	\mathcal{H}^{2d-1}(\partial \Omega).
	\end{align}
	Then the following hold.
	\begin{enumerate}[label=(\roman*),ref=(\roman*)]
		\item\label{coro_2i} For every $k=A_\Omega+\gamma h\in\N$ with $h\ge 1$,
		\begin{align}
		\lambda_k &\le  e^{\left(\frac{2d+s-1}{2d}\right)^2} \cdot h^{-\frac{2d+s-1}{2d}}\cdot (1+\log (C_g h))^{\frac{s-1}{2d}}. \label{eqeig3}
		\end{align}
		\item\label{coro_2ii} For every $k=A_\Omega-\gamma h\in\N$ with $h\ge 1$,
		\begin{align}
		\lambda_k &\ge 1- 
		 e^{\left(\frac{2d+s-1}{2d}\right)^2} \cdot h^{-\frac{2d+s-1}{2d}} \cdot
		(1+\log (C_g h))^{\frac{s-1}{2d}}. \label{eqeig4}
		\end{align}
	\end{enumerate}
\end{coro}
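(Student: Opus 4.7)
The plan is to invert Theorem \ref{th_2} to pass from a bound on $\#\{\lambda>\delta\}-|\Omega|$ to individual bounds on $\lambda_k$. Throughout, write $\alpha = \tfrac{2d+s-1}{2d} \geq 1$, so that the exponents in Theorem \ref{th_2} become $1/\alpha$ and $(\alpha-1)/\alpha$. Since $\|g\|_2 = 1$ and $\|V_g g\|_2 = \|g\|_2^2 = 1$, the hypothesis \eqref{eq_mod} implies $C_g \geq 1$, which will be used silently. I will use the elementary fact that arranging eigenvalues decreasingly, $\lambda_k > \delta$ iff $\#\{\lambda\in\sigma(\lo_\Omega) : \lambda > \delta\}\ge k$.

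For part \ref{coro_2i}, fix $h \geq 1$, set $k = A_\Omega + \gamma h$, and define the candidate threshold
$\delta_* = e^{\alpha^2} h^{-\alpha}(1+\log(C_g h))^{\alpha-1}.$
The goal is $\lambda_k \leq \delta_*$. Assuming $\delta_* \leq 1/2$ (otherwise the conclusion is either trivial from $\lambda_k \leq 1$ or handled by the same inequality applied to a nearby threshold), I would argue by contradiction: if $\lambda_k > \delta_*$, then $k \leq \#\{\lambda > \delta_*\}$, so Theorem \ref{th_2} applied with $\delta = \delta_*$ (so $\tau = 1/\delta_*$) yields
\[
\gamma h \;\leq\; k - |\Omega| \;\leq\; \gamma \cdot \delta_*^{-1/\alpha}\bigl(\log(C_g/\delta_*)\bigr)^{(\alpha-1)/\alpha},
\]
using $A_\Omega \geq |\Omega|$ on the left. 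Raising to the $\alpha$-th power gives $h^\alpha \leq \delta_*^{-1}(\log(C_g/\delta_*))^{\alpha-1}$.

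The central computation is to substitute the explicit value of $\delta_*$ and derive a contradiction. The key estimate is
\[
\log(C_g/\delta_*) = \log C_g + \alpha \log h - \alpha^2 - (\alpha-1)\log(1+\log(C_g h)) \;\leq\; \alpha(1+\log(C_g h)),
\]
which holds thanks to $C_g \geq 1$ and $\alpha \geq 1$. Plugging this and $\delta_*^{-1} = e^{-\alpha^2} h^\alpha(1+\log(C_g h))^{-(\alpha-1)}$ into the inequality collapses the logarithmic factors and leaves $h^\alpha \leq e^{-\alpha^2} \alpha^{\alpha-1} h^\alpha$. The contradiction then reduces to the elementary inequality $\alpha^{\alpha-1} < e^{\alpha^2}$ (equivalent to $(\alpha-1)\log\alpha < \alpha^2$), which is clear for every $\alpha \geq 1$. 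This is the one place where the precise constant $e^{\alpha^2}$ is needed, and it is the only moderately delicate step.

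For part \ref{coro_2ii}, I would run the mirror argument. If $\lambda_k < 1-\delta_*$ with $k = A_\Omega - \gamma h$, then $\#\{\lambda > 1-\delta_*\} \leq k-1$, and Theorem \ref{th_2} with $\delta = 1-\delta_*$ (still $\tau = 1/\delta_*$ for $\delta_* \leq 1/2$) gives
\[
|\Omega| - (k-1) \;\leq\; \gamma \cdot \delta_*^{-1/\alpha}\bigl(\log(C_g/\delta_*)\bigr)^{(\alpha-1)/\alpha}.
\]
Using the other side of the Ahlfors-type definition $A_\Omega \leq |\Omega|+1$, the left-hand side dominates $\gamma h$, and we recover exactly the same inequality as in part \ref{coro_2i}, so the same substitution yields the desired contradiction. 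The main obstacle is purely the algebraic inversion in part \ref{coro_2i}: choosing the constant $e^{\alpha^2}$ in $\delta_*$ so that the nested logarithm cancels cleanly against the power of $\alpha$ produced by the estimate on $\log(C_g/\delta_*)$. Everything else is bookkeeping of the Ahlfors-type normalizations implicit in $\gamma$ and $A_\Omega$.
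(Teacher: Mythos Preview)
Your inversion is correct and a bit more direct than the paper's. The paper does not guess the target threshold in advance; instead it first invokes Lemma~\ref{lemplunge} (a trace--moment comparison, after verifying $\newkg \le C_g'$ via Jensen's inequality) to secure $\lambda_k \le 1/2$, then applies Theorem~\ref{th_2} at $\delta \nearrow \lambda_k$ and bounds the resulting implicit inequality $h \le \lambda_k^{-\theta}(\log(C_g/\lambda_k))^{1-\theta}$ by introducing an auxiliary $\varepsilon>0$ via $\log x \le x^\varepsilon/\varepsilon$ and optimizing at $\varepsilon = 1/(1+\log(C_g h))$. Your contradiction argument plugs in $\delta_*$ from the start and collapses that optimization into the single clean inequality $\alpha^{\alpha-1} < e^{\alpha^2}$; this is arguably neater, and it has the side benefit of not needing Lemma~\ref{lemplunge} at all.

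The only genuine loose end is your parenthetical dismissal of the case $1/2 < \delta_* < 1$: ``handled by the same inequality applied to a nearby threshold'' is too vague as written. It can be made rigorous, but you should spell it out. Running the contradiction at $\delta = 1/2$ (so $\tau=2$) gives $h^\alpha \le 2(\log(2C_g))^{\alpha-1}$, while $\delta_* < 1$ forces $h^\alpha > e^{\alpha^2}(1+\log(C_g h))^{\alpha-1}$; since $1+\log(C_g h) \ge 1+\log C_g > \log(2C_g)$ and $e^{\alpha^2} \ge e > 2$, these are incompatible, so $\lambda_k \le 1/2 < \delta_*$. This replaces the paper's appeal to Lemma~\ref{lemplunge}. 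Part~\ref{coro_2ii} is fine as you wrote it, with the same caveat.
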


\subsection{Related literature and organization}
Eigenvalue asymptotics related to the truncation of the Fourier transform go back at least to \cite{MR593228}. These describe precisely the limit behavior of the eigenvalue counting function as the truncation domain grows. The description is however only valid for a fixed eigenvalue threshold, and may be therefore insufficient in many applications --- see Section \ref{sec_quality}. The need for corresponding threshold-robust spectral deviation bounds has motivated substantial research \cite{israel15,KaRoDa,BoJaKa,Os}. While this article shares a similar goal, with the exception of \cite{israel15}, the existing literature on threshold-robust spectral bounds for the truncation of the Fourier transform exploits specific properties of that context, such as a connection with a Sturm–Liouville equation, that are not available in our setting.

Time-frequency concentration operators for the short-time Fourier transform are also called localization operators \cite{da88} or (generalized) anti-Wick operators \cite{MR0291839, MR2047590}, specially when considered, in greater generality, with respect to symbols, so that the truncated integral in \eqref{eq_tfo} is replaced by a weighted integral. A remarkable special case occurs when $g$ is the one variable Gaussian window $g(t)=2^{1/4} e^{-\pi t^2}$ and $\Omega$ is a disk, as $\lo_\Omega$ is then diagonal in the Hermite basis, and the spectrum can be computed explicitly \cite{da88}. We use this example to test the sharpness of our estimates. 

In the case of time-frequency concentration operators, the benchmark result for spectral asymptotics is in \cite{MR3433287}, partially building on \cite{MR3076415}. As explained in Section \ref{sec_quality}, when applied to produce spectral deviation bounds (that is, bounds on $|\#\sett{\lambda \in \sigma(\lo_{R\cdot\Omega}): \lambda > \delta} - \abs{R\cdot\Omega}|$), the asymptotics of \cite{MR3433287} deliver conclusions that are ineffective in applications where $R$ and $\delta$ vary together. In contrast, when particularized to the dilation regime, Theorems \ref{th_intro_1} and \ref{th_2} yield threshold-robust spectral deviation bounds, 
which do allow for $R$ and $\delta$ to vary together. In addition, Theorems \ref{th_intro_1} and \ref{th_2} are applicable beyond the dilation regime.

The lacking quantitative information in the error terms of \eqref{eq_A1r} cannot be found by simply reinspecting the proofs in \cite{MR3433287}, which are conceived with the different objective of capturing the precise value of the first asymptotic term \eqref{eq_A1}. Indeed, inspection of the proofs in \cite{MR3433287} yields bounds on the term $o_{\delta,\Omega,g}$ on the right-hand side of \eqref{eq_A1r} that are exponentially worse than \eqref{eq_good}, and which may overtake the first error term \eqref{eq_A1r} if $\delta$ is allowed to fluctuate, even moderately, with $R$. Similarly, inspection of the proofs in \cite{dMFeNo} yield constants in \eqref{eq_A0} of the order $O\big(1/\sqrt\delta\big)$.

From the point of view of pseudo-differential theory, time-frequency concentration operators are Weyl quantizations of symbols of the form $\sigma = Wg * 1_\Omega$, where $Wg$ is the Wigner distribution of the window function $g$ and $*$ is the standard convolution. The results in \cite{MR3433287} apply more generally to operators with Weyl symbol $\sigma= W * [ a \cdot 1_\Omega ]$ where $a$ and $W$ are suitably smooth functions. In contrast, our proofs rely on a more specific characteristic of time-frequency concentration operators, namely that they are unitarily equivalent to Toeplitz operators on the range of the short-time Fourier transform \cite{MR1882695}.
Such spaces are reproducing kernel subspaces of $L^2(\mathbb{R}^{2d})$ and share certain properties with the Fock space of square integrable analytic functions on $\mathbb{C}^d$ with respect to the Gaussian measure. The core of our proof is to estimate Schatten $p$-norms of Hankel operators, with a value of $p$ that is optimized as a function of the spectral parameter $\delta$. We draw mainly on arguments from \cite{XiZhe}, \cite{Is}, developed to study Hankel operators on Fock spaces, and combine them with methods from geometric measure theory \cite{evga92, Ca}. While the $p$-dependence of Schatten norms is normally left unquantified \cite{XiZhe}, \cite{Is}, \cite{MR3182964}, in our case capturing that dependence is essential. We also greatly benefited from reading \cite{MR3636061}.

The proofs of Theorem \ref{th_intro_1} and the auxiliary Proposition \ref{prop1} rely on well-known methods in spectral theory. Our contribution is in the quantification of certain aspects by means of tools from geometric measure theory. Remarkably, our approach leads to almost-sharp estimates in the spectral parameter; see Section \ref{sec_disc}.

The article is organized as follows. Section~\ref{sec_pre} provides background results. In Section~\ref{sec_hankel} we derive Schatten norm estimates for Hankel operators on the range of the short-time Fourier transform. These are applied in Section \ref{sec_main} to prove Theorems \ref{th_intro_1} and \ref{th_2},
and in turn Corollaries \ref{coro_1} and \ref{coro_2} in Section \ref{sec_eigasy}. In Section \ref{sec_disc} we test the sharpness of our main estimates. Remark \ref{rem_m1s} is proved in Section \ref{sec_m1s}. Some of our proofs require revisiting standard arguments while carefully tracking dependencies on certain parameters, and are therefore given in detail. Detailed proofs of Corollaries \ref{coro_1} and \ref{coro_2} are also included for completeness.

\section{Preliminaries}\label{sec_pre}
\subsection{Notation}
We always let $|z|$ denote the Euclidean norm of a vector $z \in \mathbb{R}^{2d}$.
The differential of the Lebesgue measure is denoted $dz$. We write $1_E$ for the indicator function of a set $E \subseteq \mathbb{R}^{2d}$
and $E^C$ for its complement $E^C = \mathbb{R}^{2d} \setminus E$.
We write $a\lesssim b$ for two non-negative quantities $a,b$ if there exists a constant $C_d$ depending only on the dimension $d$ such that $a\le C_d\, b$ holds. $L^p$ norms with respect to polynomial weights are denoted as
\begin{align*}
\big\lVert (1+|z|)^{s} F(z) \big\rVert_{L^p(dz)}^p = 
\int_{\mathbb{R}^{2d}} (1+|z|)^{ps} |F(z)|^p \, dz,
\end{align*}
or $\lVert (1+|z|)^{s} F \rVert^p_p$ for short. 

For $F,G\in L^2(\R^{2d})$ we write $F\natural G$ for the \emph{twisted convolution}:
\[F\natural G(z)=\int_{\R^{2d}} F(z') G(z-z') e^{\pi i (x\xi'-x'\xi)}\, dz', \qquad z=(x,\xi),\, z'=(x',\xi') \in \mathbb{R}^d \times \mathbb{R}^d.\]
We will use the following consequence of \emph{Pool's Theorem} \cite{MR204049}:
\begin{align}\label{eq_tc}
\|F\natural G\|_2\le \|F\|_2\|G\|_2;
\end{align}
see also \cite[Chapter~2]{folland} and \cite[Corollary~14.6.2]{grbook}.
\subsection{Operators}
The spectrum of an operator $A$ on a Hilbert space is denoted $\sigma(A)$, and its Schatten (quasi)norm is given by
\begin{align*}
\|A\|_p^p = \mathrm{trace}(|A|^p)=\mathrm{trace}((A^*A)^{p/2}), \qquad p>0.
\end{align*}
The eigenvalue counting function of a self-adjoint operator $A$ is
\begin{align*}
\# \{ \lambda \in \sigma(A): \sigma > \delta \} = \mathrm{trace}\big[ 1_{(\delta,\infty)}(A) \big].
\end{align*}
In general, all expressions involving $\sigma(A)$ are to be understood to include mutiplicities.

We will use the following elementary lemma, which follows from a convexity argument. (See, e.g., \cite[Proposition~6.3.3]{zhu2007operator} for a proof.)
\begin{lemma}\label{lemma_p}
	Let $A$ be a positive operator on a Hilbert space $H$. Then
	\begin{align*}
	\ip {A^p x}{x} \leq \ip{Ax}{x}^p,
	\qquad \lVert x \rVert=1, \qquad 0<p\le1.
	\end{align*}
\end{lemma}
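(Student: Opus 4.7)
The plan is to deduce the inequality from the spectral theorem together with Jensen's inequality, as suggested by the lemma statement itself (``follows from a convexity argument'').

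First I would use the spectral theorem for positive self-adjoint operators to write $A=\int_{[0,\infty)} \lambda\, dE_\lambda$, where $\{E_\lambda\}$ is the associated projection-valued measure. For any unit vector $x \in H$, the map $S\mapsto \mu(S):=\langle E(S)x,x\rangle$ is then a Borel probability measure on $[0,\infty)$, because $\mu([0,\infty))=\langle x,x\rangle = 1$. Under this identification, both quantities of interest become integrals against $\mu$:
\begin{equation*}
\langle Ax,x\rangle = \int_{[0,\infty)} \lambda\, d\mu(\lambda),\qquad \langle A^p x,x\rangle = \int_{[0,\infty)} \lambda^p\, d\mu(\lambda).
\end{equation*}

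Next I would invoke Jensen's inequality for the concave function $\varphi(t)=t^p$ on $[0,\infty)$ (concavity holds precisely because $0<p\le 1$). Since $\mu$ is a probability measure, Jensen gives
\begin{equation*}
\int_{[0,\infty)} \lambda^p\, d\mu(\lambda) \le \left(\int_{[0,\infty)} \lambda\, d\mu(\lambda)\right)^{p},
\end{equation*}
which is exactly $\langle A^p x,x\rangle \le \langle Ax,x\rangle^p$, as required.

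The argument is entirely standard and there is no real obstacle; the only point worth a brief comment is that $A^p$ is the operator defined via the functional calculus, $A^p=\int \lambda^p\, dE_\lambda$, so that the representation of $\langle A^p x,x\rangle$ as an integral against $\mu$ is justified. If one preferred to avoid the spectral theorem altogether, an alternative would be to first prove the inequality for finite-rank positive operators by diagonalization (reducing to the scalar concavity of $t\mapsto t^p$ applied to the eigenvalues weighted by the squared coefficients of $x$), and then extend by approximation; but the spectral-theoretic route is the cleanest and is the one I would present.
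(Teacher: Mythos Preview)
Your proof is correct and is precisely the standard convexity argument the paper alludes to; the paper does not give its own proof but simply cites \cite[Proposition~6.3.3]{zhu2007operator}, where the same spectral-theorem-plus-Jensen reasoning appears.
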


\subsection{Perimeters and level sets}
The topological boundary of a set $\Omega \subseteq \mathbb{R}^{2d}$ is 
denoted $\partial \Omega$. Besides working with $\Omega$ compact, we will assume without further mention that 
$\mathcal{H}^{2d-1}(\partial \Omega) < \infty$ as the bounds we prove are otherwise trivial. Since $\Omega$ has finite measure, $\Omega$ has \emph{finite perimeter}, i.e., its indicator function has bounded variation; see, e.g., \cite[Chapter 5]{evga92}. Indeed, $\Omega$ has finite perimeter if and only if
$\mathcal{H}^{2d-1}(\partial_* \Omega) < \infty$, where $\partial_*\Omega \subset \partial\Omega$ is the \emph{measure theoretic boundary}, consisting
of points with positive density both on $\Omega$ and $\Omega^C$
\cite[Theorem 1, Section 5.11]{evga92}. Similarly, we assume without further mention that $\mathcal{H}^{2d-1}(\partial \Omega)>0$, since, otherwise, by the isoperimetric inequality, $\abs{\Omega}=0$.

We will need the following regularization lemma.
\begin{lemma}
	\label{lemma_var}
	Let $\Omega \subseteq \mathbb{R}^{2d}$ be a compact set, let
	$E=\Omega$ or $\Omega^C$ and let $\varphi \in L^1(\mathbb{R}^{2d})$.  Then
	\begin{align*}
	\Big\lVert{1_E *\varphi- \big(\smallint \varphi\big) \cdot 1_E}\Big\rVert_{L^1(\mathbb{R}^{2d})} \leq
	\mathcal{H}^{2d-1}(\partial \Omega) \cdot \int_{\mathbb{R}^{2d}} \abs{z} \abs{\varphi(z)} dz.
	\end{align*}
\end{lemma}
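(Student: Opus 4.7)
The plan is to reduce the claim, via Minkowski's integral inequality, to the classical translation estimate for functions of bounded variation, and then identify the perimeter of $E$ with a Hausdorff measure through De Giorgi's structure theorem. The whole argument is standard geometric measure theory; the only work is making sure the constants line up correctly.

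First, I would rewrite the integrand pointwise as a superposition of translation increments. Since $\big(\smallint\varphi\big)\cdot 1_E(x)=\int_{\R^{2d}}\varphi(z)\,1_E(x)\,dz$, one has
\[
1_E*\varphi(x)-\big(\smallint\varphi\big)\cdot 1_E(x)=\int_{\R^{2d}}\varphi(z)\,\big[1_E(x-z)-1_E(x)\big]\,dz.
\]
Taking the $L^1$ norm in $x$ and applying Minkowski's integral inequality then yields
\[
\Big\|1_E*\varphi-\big(\smallint\varphi\big)\cdot 1_E\Big\|_{L^1(\R^{2d})}\leq \int_{\R^{2d}}|\varphi(z)|\cdot \|1_E(\cdot-z)-1_E\|_{L^1(\R^{2d})}\,dz.
\]

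Second, I would invoke the standard translation estimate for $BV$ functions,
\[
\|1_E(\cdot-z)-1_E\|_{L^1(\R^{2d})}\leq |z|\cdot P(E),
\]
where $P(E)$ denotes the perimeter of $E$. A clean derivation goes via mollification: the smooth approximants $u_\varepsilon:=1_E*\rho_\varepsilon$ satisfy $\|u_\varepsilon(\cdot-z)-u_\varepsilon\|_1\leq |z|\cdot\|\nabla u_\varepsilon\|_1$ by the fundamental theorem of calculus applied along the segment $[0,z]$, while $\liminf_{\varepsilon\to 0}\|\nabla u_\varepsilon\|_1 = P(E)$ by the very definition of $P(E)$ as the total variation of $D\mathbf{1}_E$. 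Passing to the limit, with dominated convergence on the left, yields the inequality; I would simply cite this step from \cite{evga92} rather than reprove it.

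Finally, I would relate $P(E)$ to the Hausdorff measure of the topological boundary. The identity $1_{\Omega^C}=1-1_\Omega$ gives $P(\Omega^C)=P(\Omega)$, so the case $E=\Omega^C$ reduces to $E=\Omega$. De Giorgi's structure theorem then identifies $P(\Omega)=\mathcal{H}^{2d-1}(\partial_*\Omega)$, and since $\partial_*\Omega\subseteq\partial\Omega$ we conclude $P(\Omega)\leq\mathcal{H}^{2d-1}(\partial\Omega)$. Assembling the three displays finishes the argument. The main (and really only) obstacle is invoking the BV translation inequality with the sharp constant $P(E)$; the rest is bookkeeping with Fubini and the definition of the measure-theoretic boundary.
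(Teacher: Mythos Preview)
Your proof is correct and follows essentially the same approach as the paper. The paper's own proof merely cites \cite[Lemma~3.2]{AbGrRo} for the case $E=\Omega$ (noting, exactly as you do, that the bound there is actually in terms of $\mathrm{var}(1_\Omega)=\mathcal{H}^{2d-1}(\partial_*\Omega)\leq\mathcal{H}^{2d-1}(\partial\Omega)$), and then handles $E=\Omega^C$ via the algebraic identity $1_{\Omega^C}*\varphi-(\smallint\varphi)\,1_{\Omega^C}=(\smallint\varphi)\,1_\Omega-1_\Omega*\varphi$; your reduction via $P(\Omega^C)=P(\Omega)$ is equivalent, and your Minkowski plus BV translation estimate plus De Giorgi argument is precisely the content of the cited lemma.
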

\begin{proof}
When $E=\Omega$ a simple proof can be found in \cite[Lemma~3.2]{AbGrRo}. That reference actually provides a better bound in terms of
the measure theoretic boundary: $\mathrm{var}(1_\Omega) = \mathcal{H}^{2d-1}(\partial_* \Omega) \leq \mathcal{H}^{2d-1}(\partial \Omega)$. If $E=\Omega^C$, we use that $1_E *\varphi- \big(\smallint \varphi\big) \cdot 1_E=
 \big(\smallint \varphi\big) \cdot 1_{E^C} - 1_{E^C} *\varphi$.
\end{proof}

The following bound for level sets of distances will be a key technical tool. It is a special case of \cite[Theorems 5 and 6]{Ca}; see \cite{Kr} for related results.
\begin{prop}\label{prop_ca}
Let $\Omega \subseteq \mathbb{R}^{2d}$ be a compact set with regular boundary at scale $\eta>0$. Then
\begin{align*}
\mathcal{H}^{2d-1} \big(\{z:\ d(z,\partial \Omega)=r\} \big) \leq  \frac{C_d}{\cu}  \cdot \mathcal{H}^{2d-1}(\partial \Omega) \cdot \Big(1+\frac{r}{\eta}\Big)^{2d-1},
\end{align*}
for almost every $r>0$, where $C_d$ is a constant that depends on $d$. In addition,
$| \nabla d(z,\partial\Omega) | = 1$, for almost every $z \in \mathbb{R}^{2d}$.
\end{prop}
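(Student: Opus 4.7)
The second claim, $|\nabla d(\cdot,\partial\Omega)|=1$ almost everywhere, is standard. Since $d(\cdot,\partial\Omega)$ is $1$-Lipschitz, Rademacher's theorem yields a.e.\ differentiability with $|\nabla d|\le 1$. At any differentiability point $z\notin\partial\Omega$, the directional derivative along $(z-u)/|z-u|$, where $u\in\partial\Omega$ is a nearest point, equals $+1$, forcing $|\nabla d(z)|=1$. Under the assumption $\mathcal{H}^{2d-1}(\partial\Omega)<\infty$, the set $\partial\Omega$ has Lebesgue measure zero, so the claim holds a.e.\ on $\mathbb{R}^{2d}$.

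For the level set estimate, my plan is a direct covering argument on $\Sigma_r:=\{d(\cdot,\partial\Omega)=r\}$. Fix $r>0$, set $\rho=\min(r,\eta)$, and choose a maximal $\rho$-separated family $\{z_i\}\subset \Sigma_r$, so that $\{B_\rho(z_i)\}$ covers $\Sigma_r$ and $\{B_{\rho/2}(z_i)\}$ is pairwise disjoint. For each $i$ pick a nearest point $u_i\in\partial\Omega$; then $|z_i-u_i|=r$. Two ingredients would then combine: (a) the Ahlfors lower bound $\mathcal{H}^{2d-1}(\partial\Omega\cap B_{\rho/2}(u_i))\ge \cu\,(\rho/2)^{2d-1}$ valid since $\rho/2\le\eta$, and (b) a local area estimate $\mathcal{H}^{2d-1}(\Sigma_r\cap B_\rho(z_i))\lesssim \rho^{2d-1}$, which follows from $\Sigma_r$ being a $(2d-1)$-rectifiable level set of a Lipschitz function of unit gradient. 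If the boundary points $\{u_i\}$ had bounded overlap in the sense that each $u\in\partial\Omega$ lies in $O(1)$ balls $B_{\rho/2}(u_i)$, summing (a) over $i$ would give
\begin{equation*}
\#\{i\}\lesssim \frac{\mathcal{H}^{2d-1}(\partial\Omega)}{\cu\,\rho^{2d-1}},
\end{equation*}
and multiplying by (b) would yield $\mathcal{H}^{2d-1}(\Sigma_r)\lesssim \mathcal{H}^{2d-1}(\partial\Omega)/\cu$, matching the proposition in the regime $r\le\eta$.

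The main obstacle is precisely this overlap claim. Without smoothness or positive reach of $\partial\Omega$, the multi-valued nearest-point map can send many level-set points toward the same region of $\partial\Omega$, and the multiplicity grows with $r/\eta$, producing exactly the factor $(1+r/\eta)^{2d-1}$. Following \cite{Ca}, the remedy is an angular decomposition: one partitions $\Sigma_r$ into pieces according to the direction of $z-u$ as seen from each candidate boundary point, applies Ahlfors regularity on each piece at the correct scale, and balances the competing scales $r$ and $\eta$. Tracking constants carefully throughout keeps the final bound linear in $1/\cu$ and polynomial of degree $2d-1$ in $r/\eta$, which is what the general theorems of Caselles cited in the paper deliver; the proposition is then obtained by specialising those theorems to the one-sided Ahlfors setting considered here.
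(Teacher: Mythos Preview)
The paper does not give a proof of this proposition; it is quoted as a special case of \cite[Theorems~5 and~6]{Ca} (with \cite{Kr} mentioned for related results), and no further argument is supplied. Your treatment of the gradient claim is correct and self-contained, and is the standard argument.

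For the level-set estimate, your covering sketch correctly isolates the crux of the matter---controlling the multiplicity of the nearest-point projection, which is what forces the factor $(1+r/\eta)^{2d-1}$---but you do not resolve it independently: you explicitly defer to \cite{Ca} for the angular decomposition that handles the overlap. In that sense your proposal and the paper's treatment are aligned, as both ultimately rest on the same external reference for the substantive step. Note, incidentally, that even in the regime $r\le\eta$ your conditional overlap hypothesis on the balls $B_{\rho/2}(u_i)$ is not free: $\rho$-separation of the $z_i$ does not by itself separate the nearest boundary points $u_i$, so some version of the directional argument from \cite{Ca} is already needed there.
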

Note that if $\Omega$ has regular boundary at scale $\eta>0$
(and assuming as we do that $\mathcal{H}^{2d-1}(\partial \Omega)>0$), it follows by differentiation around a point of positive $\mathcal{H}^{2d-1}$-density that \eqref{eq_kappa} satisfies
\begin{align}\label{eq_kappa_2}
\cu \leq c_d,
\end{align}
for a dimensional constant $c_d$.

\subsection{Toeplitz and Hankel operators on the range of the STFT}
For a normalized window function $g \in L^2(\mathbb{R}^{d})$,
$\lVert{g}\rVert_2=1$, the short-time Fourier transform
defines an isometry $V_g:L^2(\mathbb{R}^d) \to L^2(\mathbb{R}^{2d})$ \cite[Chapter 3]{grbook}. 

The range of the short-time Fourier transform with window $g$ is the (closed) subspace
\begin{align*}
\hh = \left\{V_g f: f \in L^2(\mathbb{R}^d) \right\} \subset L^2(\mathbb{R}^{2d}),
\end{align*}
and the orthogonal projection $P: L^2(\mathbb{R}^{2d}) \to \hh$ is the integral operator
\begin{align*}
P F (z) = \int_{\mathbb{R}^{2d}} K(z,w) F(w) dw,
\end{align*}
where $K$ is the \emph{reproducing kernel} 
\begin{align*}
K(z,w) = K_w(z) = V_gg (z-w) e^{2\pi i (\xi'-\xi) x'}, \qquad z=(x,\xi), w=(x',\xi') \in \mathbb{R}^d \times \mathbb{R}^d.
\end{align*}
In particular
$\lVert K_w \rVert_{L^2(\mathbb{R}^{2d})} 
=1$.
Since $P$ is a projection, $K$ satisfies the \emph{reproducing formula}
\begin{align}\label{eq_rep}
K(z,w) = \int_{\mathbb{R}^{2d}} K(z,z') K (z',w) \,dz', \qquad z,w \in \mathbb{R}^{2d}.
\end{align}
If  $\{e_n\}_n$ is an orthonormal basis of $\hh$, then $K$ can be expanded as $K(z,w)=\sum_n e_n(z)\overline{e_n(w)}$.

In terms of the adjoint short-time Fourier transform $V^*_g: L^2(\mathbb{R}^{2d}) \to L^2(\mathbb{R}^d)$, $V^*_g F(t) = \int_{\mathbb{R}^{2d}} F(x,\xi) g(t-x) e^{2 \pi i \xi t} \,dx d\xi$, the time-frequency concentration operator $L_\Omega$ is
\begin{align*}
L_\Omega f = V^*_g \big[ 1_\Omega \cdot V_g f \big].
\end{align*}
This shows that time-frequency concentration operators \eqref{eq_tfo} are unitarily equivalent to contractions of certain Toeplitz operators on the range of the short-time Fourier transform. Indeed, given a compact domain $\Omega \subseteq \mathbb{R}^{2d}$, we define the
\emph{Gabor-Toeplitz} operator $T_\Omega:L^2(\mathbb{R}^{2d}) \to L^2(\mathbb{R}^{2d})$ by
\begin{align*}
T_\Omega F = P \big( 1_\Omega \cdot PF \big), \qquad F \in L^2(\mathbb{R}^{2d}).
\end{align*}
As $P=V_g V^*_g$, it follows that, with respect to the decomposition $L^2(\mathbb{R}^{2d}) = \hh \bigoplus \hh^\perp$,
\begin{align}\label{eq_conj}
T_\Omega F = \begin{bmatrix}
V_g  L_\Omega  V_g^* F  & 0\\
0 & 0
\end{bmatrix},
\end{align}
see, e.g., \cite{dMFeNo, MR1882695, MR2452833}.
Thus, the spectrum of $T_\Omega$ and $\lo_\Omega$ coincide except for the multiplicity of the eigenvalue $\lambda=0$.
A direct calculation shows that
\begin{align}\label{eq_traces}
\tr(\lo_\Omega)=|\Omega|, \qquad \text{and} \qquad \tr(\lo_\Omega^2)=\int_\Omega \int_\Omega |V_g g(z-z')|^2 \, dz dz',
\end{align}
see, e.g., \cite[Lemma~2.1]{AbGrRo}.

Our proofs are based on the analysis of the \emph{Hankel operator}
$H_\Omega = (1-P) 1_\Omega P$, i.e.,
\begin{align*}
H_\Omega F = 1_\Omega \cdot P F - P \big( 1_\Omega \cdot PF \big),
\qquad F \in L^2(\mathbb{R}^{2d}).
\end{align*}
We sometimes drop the dependence on $g$ and $\Omega$, and simply write $T$ and $H$. Notice that $H$ and $T$ are related by,
\begin{align}\label{eq_ht}
H^*H=T-T^2.
\end{align}

\section{Estimates for Hankel operators on the range of the STFT}\label{sec_hankel}
\begin{prop}\label{prop1}
	Let $\Omega\subseteq\R^{2d}$ be a compact set with regular boundary at scale $\eta >0$. Let $p \in(0,2]$ and $\alpha>0$.
	Then
	\begin{align}\label{eq_p21}
		\|H\|_p^p\leq C_d \cdot 
		\mathcal{H}^{2d-1}(\partial \Omega) \cdot
		\frac{ \left\|(1+|z|\eta^{-1})^{(2d-1)(2-p)/2p} (1+|z|)^{(1+\alpha)(2-p)/2p+1/2}V_g g\right\|_2^{p}}{(\cu \cdot \alpha)^{1-p/2}},
	\end{align}
	where $C_d$ is a constant that only depends on the dimension $d$.
	
	In particular, if $\eta \in(0,1]$,
	\begin{align}\label{eq_p2}
	\|H\|_p^p\leq C_d \cdot 
	\mathcal{H}^{2d-1}(\partial \Omega) \cdot
	\frac{ \left\|(1+|z|)^{(2d+\alpha)(2-p)/2p+1/2}V_g g\right\|_2^{p}}{(\cu \cdot \eta^{2d-1} \cdot \alpha)^{1-p/2}}.
	\end{align}
\end{prop}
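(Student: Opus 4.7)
The plan is to bound $\|H\|_p^p$ via a pointwise estimate on the diagonal of $H^*H$, using the continuous tight frame of normalized reproducing kernels $\{K_z:z\in\R^{2d}\}$ of $\hh$ (each $K_z$ has unit norm since $K(z,z)=V_gg(0)=1$), then to pass to a one-dimensional radial integral through the coarea formula, and finally to apply Hölder's inequality against an auxiliary weight with a free parameter $\alpha$. Since $H$ vanishes on $\hh^\perp$ and $\{K_z\}$ resolves the identity on $\hh$,
\[
\|H\|_p^p=\tr\bigl((H^*H)^{p/2}\bigr)=\int_{\R^{2d}}\langle (H^*H)^{p/2}K_z,K_z\rangle\,dz\le\int_{\R^{2d}}\|HK_z\|_2^p\,dz,
\]
where the inequality is Lemma \ref{lemma_p} applied pointwise to the positive operator $H^*H$. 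For the pointwise bound, $PK_z=K_z$ and $(1-P)K_z=0$ yield $HK_z=(1-P)(1_\Omega K_z)=-(1-P)(1_{\Omega^C}K_z)$, so $\|HK_z\|_2\le\min\{\|1_\Omega K_z\|_2,\|1_{\Omega^C}K_z\|_2\}$; since $|K_z(w)|=|V_gg(w-z)|$ and $|w-z|\ge d(z,\partial\Omega)$ whenever $z,w$ are on opposite sides of $\partial\Omega$, this gives
\[
\|HK_z\|_2^2\le\mu\bigl(d(z,\partial\Omega)\bigr),\qquad\mu(r):=\int_{|u|\ge r}|V_gg(u)|^2\,du.
\]

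By the coarea formula (valid as $|\nabla d(\cdot,\partial\Omega)|=1$ a.e., by Proposition \ref{prop_ca}), the estimate reduces to $\int_0^\infty\mu(r)^{p/2}X(r)\,dr$ with $X(r):=\mathcal{H}^{2d-1}(\{d(\cdot,\partial\Omega)=r\})$. The central step is Hölder's inequality with exponents $(2/p,\,2/(2-p))$ applied to the factorization
\[
\mu(r)^{p/2}X(r)=\bigl[\mu(r)X(r)\phi(r)\bigr]^{p/2}\cdot\bigl[X(r)\phi(r)^{-p/(2-p)}\bigr]^{(2-p)/2},
\]
with an auxiliary weight $\phi(r)=(1+r)^{c}$. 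The first factor, after swapping the order of integration and using $\int_0^{|u|}X(r)\phi(r)\,dr\le\phi(|u|)\,|\{d\le|u|\}|$, together with Proposition \ref{prop_ca} applied to $|\{d\le|u|\}|$, reproduces the target weighted $L^2$-norm of $V_gg$ raised to the $p/2$-th power. The second factor is a convergent integral $\int(1+r/\eta)^{2d-1}(1+r)^{-2c/(2-p)}\,dr$: choosing $2c/(2-p)=2d+\alpha$ bounds it by $\eta^{-(2d-1)}/\alpha$ when $\eta\le 1$, and Proposition \ref{prop_ca} enters this factor through $X^{1-p/2}$, yielding the sharp $\cu^{-(1-p/2)}$ dependence.

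Assembling the two Hölder factors and matching exponents on $(1+|z|)$ and $(1+|z|\eta^{-1})$ produces \eqref{eq_p21}; the inequality \eqref{eq_p2} follows by absorbing $(1+|z|\eta^{-1})$ into $(1+|z|)$ when $\eta\le 1$. The principal obstacle is the precise engineering of the weight $\phi$ so that both Hölder factors simultaneously yield (i) the target weighted $L^2$-norm of $V_gg$ with exact exponents $(2d-1)(2-p)/(2p)$ on $(1+|z|\eta^{-1})$ and $(1+\alpha)(2-p)/(2p)+1/2$ on $(1+|z|)$, and (ii) the $(\cu\alpha)^{-(1-p/2)}$ scaling, with the free parameter $\alpha>0$ encoding the extra polynomial decay required for the convergence of the Hölder co-integral.
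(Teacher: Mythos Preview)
Your opening reduction is correct and in fact cleaner than the paper's: the observation $HK_z=(1-P)(1_\Omega K_z)=-(1-P)(1_{\Omega^C}K_z)$, giving $\|HK_z\|_2^2\le\mu(d(z,\partial\Omega))$, replaces the paper's Steps~1--2, which reach the analogous quantity $I_3(E)=\int_E(\int_{E^c}|V_gg(z'-z)|^2\,dz')^{p/2}dz$ through an explicit kernel expansion and a twisted-convolution estimate (Pool's theorem). No precision is lost at this stage.

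The gap is in how you treat the two H\"older factors. You propose to control \emph{both} through Proposition~\ref{prop_ca}: the co-factor via the level-set bound $X(r)\lesssim\cu^{-1}\mathcal{H}^{2d-1}(\partial\Omega)(1+r/\eta)^{2d-1}$, and the weighted factor via the same proposition applied to $|\{d\le|u|\}|=\int_0^{|u|}X(r)\,dr$. This double use forces a factor $\cu^{-p/2}$ into the first H\"older term and $\cu^{-(1-p/2)}$ into the second, so the product carries $\cu^{-1}$ rather than the claimed $\cu^{-(1-p/2)}$. It also overloads the $\eta$-weight: once the tube bound contributes its own $(1+|u|/\eta)^{2d-1}$ on top of whatever $(1+r/\eta)$-power sits in $\phi$, the resulting weighted norm has exponent $(2d-1)/p$ on $(1+|z|/\eta)$, not the target $(2d-1)(2-p)/(2p)$. (Also, $\phi^{-p/(2-p)}=(1+r)^{-cp/(2-p)}$, not $(1+r)^{-2c/(2-p)}$.) So your scheme proves a strictly weaker inequality than \eqref{eq_p21}.

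The paper avoids both losses by \emph{not} collapsing the inner integral to the tail $\mu$. It keeps the full double integral $\int_E\int_{E^c}h(z)^{(2-p)/p}|V_gg(z'-z)|^2\,dz'\,dz$, uses $d(z,\partial\Omega)\le|z'-z|$ to replace $h(z)$ by a function of $z'-z$, and then applies Lemma~\ref{lemma_var} (the BV regularization inequality), which produces a factor $\mathcal{H}^{2d-1}(\partial\Omega)$ with no $\cu$ and no extra $(1+|z|/\eta)$ growth. Proposition~\ref{prop_ca} is invoked only once, on the co-factor $\int_E h(z)^{-1}\,dz$. Having already integrated out $z'$ into $\mu$, your argument cannot access Lemma~\ref{lemma_var}; that lemma is the missing ingredient needed to reach the stated $\cu^{-(1-p/2)}$ dependence and the correct weight exponents.
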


\begin{proof}
\noindent {\bf Step 1}. We first argue as in the proof of \cite[Theorem~1.1]{XiZhe}. 
	
	Denote $A=(H^*H)^{p/2}$. Since $A|_{\hh^\perp}=0$, for an orthonormal basis $\{e_n\}_n$ of $\hh$ we have
	\begin{align*}
	\|H\|_p^p&=\tr(A)=\sum_n \langle A e_n, e_n\rangle=\sum_n \int_{\R^{2d}} Ae_n(z) \overline{e_n(z)} \, dz =\sum_n \int_{\R^{2d}} PAe_n(z) \overline{e_n(z)} \, dz
	\\ &=\sum_n \int_{\R^{2d}}\langle Ae_n,K_z\rangle  \overline{e_n(z)} \, dz
	= \int_{\R^{2d}}\big\langle A\sum_n e_n \overline{e_n(z)},K_z\big\rangle   \, dz
= \int_{\R^{2d}}\langle AK_z,K_z\rangle   \, dz
	\\ &=\int_{\R^{2d}}\langle (H^*H)^{p/2}K_z,K_z\rangle   \, dz.
	\end{align*}
	By Lemma \ref{lemma_p}, we have $\langle (H^*H)^{p/2}K_z,K_z\rangle\le \langle (H^*H)
	K_z,K_z\rangle ^{p/2}$ since $p/2\le 1$, and, consequently,
	\begin{align}\label{eq1}
	\|H\|_p^p&\le \int_{\R^{2d}}\langle H^*HK_z,K_z\rangle ^{p/2}  \, dz=\int_{\R^{2d}}\|HK_z\|_2 ^{p}  \, dz.
	\end{align}
	Let us describe the function $HK_z$:
	\begin{align*}
	HK_z(w)&= (1-P) \big[ \rchi_\Omega \cdot  PK_z \big](w)=(1-P) \big[ \rchi_\Omega \cdot K_z\big](w)
	\\ &=\rchi_\Omega(w)K(w,z)-\int_{\R^{2d}}\rchi_\Omega(z')K(z',z) K(w,z') \, dz'
	\\ &=\int_{\R^{2d}}(\rchi_\Omega(w)-\rchi_\Omega(z'))K (w,z')K(z',z) \, dz'
	\\ &=\int_{\R^{2d}}(\rchi_\Omega(w)\rchi_{\Omega^c}(z')-\rchi_{\Omega^c}(w)\rchi_\Omega(z'))K (w,z')K(z',z) \, dz'.
	\end{align*}
	So,
	\begin{align*}
	\|HK_z\|_2\le \, &\Big(\int_{\Omega} \Big|\int_{\Omega^c} K (w,z')K(z',z) \, dz'\Big|^{2}\, dw\Big)^{1/2}
	\\ &+\Big(\int_{\Omega^c} \Big|\int_{\Omega} K (w,z')K(z',z) \, dz'\Big|^{2}\, dw\Big)^{1/2}.
	\end{align*}
	We combine the last estimate with \eqref{eq1}, and use the fact that $(a+b)^{p}\le 2(a^p+b^p)$ for $a,b\ge 0$, $0<p\le 2$, to split the integrals similarly to \cite[Proof of Lemma~4.4]{dMFeNo}:
	\begin{align*}
	\|H\|_p^p\lesssim \, &\int_{\R^{2d}} \Big(\int_{\Omega} \Big|\int_{\Omega^c} K (w,z')K(z',z) \, dz'\Big|^{2}\, dw\Big)^{p/2}  \, dz
	\\ &+\int_{\R^{2d}} \Big(\int_{\Omega^c} \Big|\int_{\Omega} K (w,z')K(z',z) \, dz'\Big|^{2}\, dw\Big)^{p/2}  \, dz
	\\ =\, & \int_{\Omega} \Big(\int_{\Omega} \Big|\int_{\Omega^c} K (w,z')K(z',z) \, dz'\Big|^{2}\, dw\Big)^{p/2}  \, dz
	\\ &+ \int_{\Omega^c} \Big(\int_{\Omega} \Big|\int_{\Omega^c} K (w,z')K(z',z) \, dz'\Big|^{2}\, dw\Big)^{p/2}  \, dz
	\\ &+\int_{\Omega} \Big(\int_{\Omega^c} \Big|\int_{\Omega} K (w,z')K(z',z) \, dz'\Big|^{2}\, dw\Big)^{p/2}  \, dz
	\\ &+\int_{\Omega^c} \Big(\int_{\Omega^c} \Big|\int_{\Omega} K (w,z')K(z',z) \, dz'\Big|^{2}\, dw\Big)^{p/2}  \, dz.
	\end{align*}
		Hence,
	\begin{align}\label{eq_b1}
	\|H\|_p^p\lesssim \max \left\{I_j(E): \ j=1,2, E=\Omega, \Omega^C	
	\right\},
	\end{align}
	where
	\begin{align*}
	I_1(E) &= \int_{E} \Big(\int_{E} \Big|\int_{E^c} K (w,z')K(z',z) \, dz'\Big|^{2}\, dw\Big)^{p/2}  \, dz,
	\\I_2(E) &= \int_{E} \Big(\int_{E^c} \Big|\int_{E} K (w,z')K(z',z) \, dz'\Big|^{2}\, dw\Big)^{p/2}  \, dz.
	\end{align*}
	
	\noindent {\bf Step 2}. We let $E$ denote either $\Omega$ or $\Omega^C$; in both cases $\partial E=\partial \Omega$. 
	We write for short $I_1=I_1(E)$, $I_2=I_2(E)$, and introduce the quantity:
	\begin{align*}
 I_3 = I_3(E) &= \int_{E} \Big(\int_{E^c} | V_g g(z'-z)|^2 \, dz'\Big)^{p/2}  \, dz.
	\end{align*}
	Let us show that
	\begin{align}\label{eq_b2}
	I_1\le   I_3, \mbox{ and } I_2\le 4 I_3.
	\end{align}

	For $I_1$ define $F\in L^2(\R^{2d})$ by
	\[F(x,\xi)= V_g g (x,\xi) e^{\pi i x \xi},\]
	and notice that for $z=(x,\xi)$ and $z'=(x',\xi')$ we have
	\begin{align*}
		K(z',z) &=  V_g g (z'-z)  e^{2\pi i (\xi-\xi')x} 
		=  F (z'-z) e^{-\pi i (x'-x)(\xi'-\xi)}  e^{2\pi i (\xi-\xi')x}
		\\ &
		= F (z'-z) e^{\pi i (-x'\xi'+x'\xi-x\xi'+x\xi)}.
	\end{align*}
	Denoting $w=(x'',\xi'')$ we get
	\begin{align*}
		K(w,z')K(z',z) &=\hphantom{:} F (w-z') \cdot F (z'-z) \cdot e^{\pi i (-x''\xi'' + x'' \xi' -x'\xi'' +x'\xi' -x'\xi'+x'\xi-x\xi'+x\xi)}
		\\ &=\hphantom{:} F(w-z') \cdot F (z'-z) e^{\pi i (x'\xi-x\xi'+x\xi)} \cdot e^{\pi i ( x'' \xi' -x'\xi'' )}e^{-\pi i x''\xi''}
		\\ &=:F(w-z') \cdot G_z (z') \cdot e^{\pi i ( x'' \xi' -x'\xi'' )}e^{-\pi i x''\xi''}.
	\end{align*}
	So,
	\begin{align*}
	\int_{E^c}K(w,z')K(z',z) dz' &= e^{-\pi i x''\xi''} \int_{\R^{2d}} \rchi_{E^c}(z') G_z (z') F(w-z') e^{\pi i ( x'' \xi' -x'\xi'' )}dz'
	\\ &= e^{-\pi i x''\xi''} (\rchi_{E^c}G_z)\natural F(w).
	\end{align*}
	Invoking \eqref{eq_tc}, we obtain
	\begin{multline}\label{eq_i1}
		\int_{\R^{2d}}\Big|\int_{E^c}K(w,z')K(z',z) dz'\Big|^2 dw =\|(\rchi_{E^c}G_z)\natural F\|_2^2 
		\le \|F\|_2^2 \|\rchi_{E^c} G_z\|_2^2 
		\\=\|V_g g\|_2^2 \int_{E^c}|V_g g (z'-z)|^2 dz'
		= \int_{E^c}|V_g g (z'-z)|^2 dz', 
	\end{multline}
	and it follows that $I_1\le I_3$. Regarding $I_2$, we observe that, by the reproducing formula \eqref{eq_rep},
	\[\int_{E} K (w,z')K(z',z) \, dz' = K(w,z)-\int_{E^c} K (w,z')K(z',z) \, dz'.\]
	By the triangle inequality 
	and applying again \eqref{eq_i1},
	\begin{align*}
	\Big(\int_{E^c} \Big|\int_{E} K (w,z')K(z',z) \, dz'\Big|^{2}\, dw\Big)^{1/2}\! \le \, & \Big(\int_{E^c} |K(w,z)|^{2}\, dw\Big)^{1/2}
	\\ &+\Big(\int_{\R^{2d}}\Big|\int_{E^c} K (w,z')K(z',z) \, dz'\Big|^{2} dw\Big)^{1/2}
	\\ \le \, & 2 \Big( \int_{E^c} | V_g g (z'-z)|^2 \, dz'\Big)^{1/2} .
	\end{align*}
	This directly implies that $I_2\le 2^p  I_3 \le 4  I_3$.
	
	\noindent {\bf Step 3}. By \eqref{eq_b1} and \eqref{eq_b2},
	\begin{align} \label{eq2}
	\|H\|_p^p\lesssim  \max\{I_3(E): E=\Omega, \Omega^C\}.
	\end{align}
	We now bound $I_3$ for either $E=\Omega, \Omega^C$. Recall the parameter $\alpha>0$ and define $h:\R^{2d}\to\R$ as \[h(z)=(1+d(z,\partial \Omega)\eta^{-1})^{2d-1}(1+d(z,\partial \Omega))^{1+\alpha}. \]
	By H\"{o}lder's inequality,
	\begin{align}\label{i3}
	I_3 &=\int_{E} \frac{h(z)^{1-p/2}}{h(z)^{1-p/2}} \Big(\int_{E^c} | V_g g (z'-z)|^2 \, dz'\Big)^{p/2}  \, dz \notag
	\\ &\le \Big( \int_{E} \frac{1}{h(z)} \, dz\Big)^{1-p/2} \Big(\int_{E}\int_{E^c} h(z)^{(2-p)/p}| V_g g(z'-z)|^2 \, dz'dz\Big)^{p/2} .
	\end{align}
	We study the two integrals in \eqref{i3} separately. For the first one we use the coarea formula and invoke Proposition \ref{prop_ca} to obtain
	\begin{align}\label{coa}
	\begin{aligned}
	\int_{E} \frac{1}{h(z)} \, dz \le & \int_{\R^{2d}} \frac{1}{h(z)} \, dz= \int_{\R^{2d}} \frac{|\nabla d(z,\partial \Omega)|}{h(z)} \, dz
	= \int_0^\infty \frac{\mathcal{H}^{2d-1}(\{w :\ d(w,\partial \Omega)=r\})}{(1+r\eta^{-1})^{2d-1}(1+r)^{1+\alpha}} \, dr
	\\
	&\lesssim \frac{\mathcal{H}^{2d-1}(\partial \Omega)}{\cu} \int_0^\infty \frac{1}{(1+r)^{1+\alpha}} \, dr = \frac{\mathcal{H}^{2d-1}(\partial \Omega)}{\cu  \cdot \alpha}.
	\end{aligned}
	\end{align}
	Next we bound the second integral in \eqref{i3}. Since $E$ is either $\Omega$ or $\Omega^C$, for $z \in E, z' \in E^C$,
	\begin{align*}
	h(z)&=(1+d(z,\partial \Omega)\eta^{-1})^{2d-1}(1+d(z,\partial \Omega))^{1+\alpha}
	\le (1+|z'-z|\eta^{-1})^{2d-1}(1+|z'-z|)^{1+\alpha}.
	\end{align*}
	So we get
	\begin{multline*}
	\int_{E}\int_{E^c} h(z)^{(2-p)/p}| V_g g (z'-z)|^2 \, dz'dz 
	\\ \le \int_{E}\int_{E^c} (1+|z'-z|\eta^{-1})^{(2d-1)(2-p)/p}(1+|z'-z|)^{(1+\alpha)(2-p)/p}| V_g g (z'-z)|^2 \, dz'dz. \notag
	\end{multline*}
	Setting $\varphi(z)=(1+|z|\eta^{-1})^{(2d-1)(2-p)/p}(1+|z|)^{(1+\alpha)(2-p)/p}| V_g g (z)|^2$,
	Lemma \ref{lemma_var} gives
	\begin{align}\label{var}
	\begin{aligned}
	\int_{E}\int_{E^c} h(z)^{(2-p)/p}| V_g g(z'-z)|^2 \, dz'dz 
	&\le \int_{E^c} \int_{E} \varphi (z'-z) \, dz dz'.
	\\ &\le \big\| \left(\smallint \varphi\right)\rchi_{E^C}-\rchi_{E^C}\ast \varphi \big\|_1
	\\
	&\le \mathcal{H}^{2d-1}(\partial E) \int_{\R^{2d}} |z||\varphi(z)| \, dz.	
	\end{aligned}
	\end{align}
	Combining \eqref{i3}, \eqref{coa} and \eqref{var} we obtain
	\begin{multline*}
	I_3 \lesssim \frac{\mathcal{H}^{2d-1}(\partial \Omega)}{(\cu \cdot \alpha)^{1-p/2}} \|z\varphi\|_1^{p/2}
	\\ \le \frac{\mathcal{H}^{2d-1}(\partial \Omega)}{(\cu \cdot  \alpha)^{1-p/2}} \big\lVert (1+|z|\eta^{-1})^{(2d-1)(2-p)/p}(1+|z|)^{(1+\alpha)(2-p)/p+1}|V_g g|^2\big\rVert_1^{p/2},
	\end{multline*}
	which, together with \eqref{eq2}, yields \eqref{eq_p21}.
\end{proof}

\section{Eigenvalue estimates}\label{sec_eig}
\subsection{Spectral deviation estimates}\label{sec_main}
The next lemma allows us to describe the eigenvalue counting function in terms of Schatten norms of Hankel operators.
\begin{lemma} \label{lemaautoval}
	Let $\lVert{g}\rVert_{L^2(\mathbb{R}^d)}=1$, $\Omega\subseteq \R^{2d}$ be compact, and let $H$ be the corresponding Gabor-Hankel operator. Then, for every $\delta\in(0,1)$ and every $0<p \leq 2$,
	\begin{align*}
	\big|\#\{\lambda \in \sigma(\lo_\Omega): \lambda > \delta \}-|\Omega|\big|
	\leq (\delta(1-\delta))^{-p/2}\cdot \|H\|_p^p,
	\end{align*}
\end{lemma}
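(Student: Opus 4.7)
The plan is essentially algebraic once the right bookkeeping is set up. The time-frequency concentration operator $\lo_\Omega$ is trace class with $\tr(\lo_\Omega) = |\Omega|$ by \eqref{eq_traces}, so its spectrum consists of a sequence $\{\lambda_k\} \subset [0,1]$ of eigenvalues (with multiplicity) satisfying $\sum_k \lambda_k = |\Omega|$. The counting function on the left of the inequality is $\sum_k 1_{(\delta,1]}(\lambda_k)$, so
\[
\big|\#\{\lambda \in \sigma(\lo_\Omega):\lambda>\delta\} - |\Omega|\big|
\;\le\; \sum_k \big| 1_{(\delta,1]}(\lambda_k) - \lambda_k\big|.
\]
The first step is therefore to find a clean pointwise estimate for the integrand.

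The key inequality will be
\[
\big|1_{(\delta,1]}(\lambda) - \lambda\big| \;\le\; (\delta(1-\delta))^{-p/2}\,(\lambda(1-\lambda))^{p/2}, \qquad \lambda \in [0,1],
\]
which I would check by splitting into the two cases $\lambda>\delta$ and $\lambda\le\delta$. In the first case $|1 - \lambda| = 1-\lambda \le (1-\lambda)^{p/2}$ (using $1-\lambda\le 1$ and $p\le 2$), and then $(1-\lambda)^{p/2} = \lambda^{-p/2}(\lambda(1-\lambda))^{p/2} \le \delta^{-p/2}(\lambda(1-\lambda))^{p/2}$. The symmetric bound in the second case uses $\lambda\le\lambda^{p/2}$ and $(1-\lambda)\ge 1-\delta$. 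Taking the weaker of the two constants gives the displayed bound.

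Summing over $k$ reduces the statement to identifying $\sum_k (\lambda_k(1-\lambda_k))^{p/2}$ with $\|H\|_p^p$. For this I would invoke the identity $H^*H = T - T^2$ from \eqref{eq_ht} together with the unitary equivalence \eqref{eq_conj}. Since $T_\Omega$ vanishes on $\hh^\perp$, so does $H^*H$, while on $\hh$ the operator $T_\Omega$ has the same eigenvalues $\{\lambda_k\}$ as $\lo_\Omega$; hence $H^*H$ has eigenvalues $\{\lambda_k(1-\lambda_k)\}_k$ (plus zeros in $\hh^\perp$, which contribute nothing). Consequently
\[
\sum_k (\lambda_k(1-\lambda_k))^{p/2} \;=\; \tr\big((H^*H)^{p/2}\big) \;=\; \|H\|_p^p,
\]
and combining with the previous two displays completes the argument.

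There is no real obstacle: the entire proof is a two-line convexity estimate on $[0,1]$ plus the spectral mapping identity coming from $H^*H=T-T^2$. The mildly delicate point is ensuring the constant is correctly symmetric in $\delta$ and $1-\delta$, which is why I would write both cases of the pointwise bound before taking the weakest common constant; and one should note that the eigenvalues $\lambda_k \in \{0,1\}$ (if any) contribute nothing to either side, so trace-class summability of $L_\Omega$ is all that is needed to justify the interchange of sum and inequality.
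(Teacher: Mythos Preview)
Your proof is correct and is essentially the same as the paper's: the paper defines $G(t)=1_{(\delta,1]}(t)-t$ and states the same pointwise bound $|G(t)|\le((t-t^2)/(\delta-\delta^2))^{p/2}$ via a ``straightforward concavity argument'' (which your case split makes explicit), then passes to traces using $H^*H=T-T^2$ exactly as you do. The only cosmetic difference is that the paper phrases the final step as functional calculus, $|\tr(G(T))|\le\tr(|G|(T))$, rather than as an eigenvalue sum.
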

\begin{proof}
	Define $G:[0,1]\to\R$ by
	\[G(t)=\begin{cases} -t & \text{ if } 0\le t \le \delta \\ 1-t & \text{ if } \delta < t \le 1 \end{cases}.
	\]
	Since $|G(t)|\le1$ for $t\in[0,1]$, a straightforward concavity argument shows that
	\[|G(t)|\le \left(\frac{t-t^2}{\delta-\delta^2}\right)^{p/2},   \qquad t\in[0,1].\] 
	Recall that the Gabor-Toeplitz operator $T$ is related to $\lo_{ \Omega}$ by \eqref{eq_conj}, and to $H$ by \eqref{eq_ht}. Since $0\le T \le 1$, we conclude
	\begin{align*}
\big|\#\{\lambda \in \sigma(\lo_\Omega): \lambda > \delta \}-|\Omega|\big|
	&= |\tr(G(T))|\le \tr(|G|(T)) \notag
	\\ &\le (\delta-\delta^2)^{-p/2} \cdot \tr\left[(T-T^2)^{p/2}\right]
	\\ &=  (\delta(1-\delta))^{-p/2} \cdot \|H\|_p^{p} . \qedhere
	\end{align*}
\end{proof}

We can now prove our first main result.
\begin{proof}[Proof of Theorem \ref{th_intro_1}]
Let $0<p\le 1/2$ and $0<\alpha\le 1$. By Lemma \ref{lemaautoval},
\begin{align} \label{tau}
\begin{aligned}
\left|\#\{\lambda \in \sigma(\lo_\Omega): \ \lambda>\delta\}-|\Omega|\right| 
&\le (\delta(1-\delta))^{-p/2} \|H\|_p^p 
\\
&\leq 2^{p/2} \max\{\delta^{-1},(1-\delta)^{-1}\}^{p/2} \|H\|_p^p 
\\ &\leq 2 \max\{\delta^{-1},(1-\delta)^{-1}\}^{p/2} \|H\|_p^p.
\end{aligned}
\end{align}
We use Proposition \ref{prop1} to bound $\|H\|_p^p$. First note that
\begin{align*}
	(1+|z|\eta^{-1})^{(2d-1)(2-p)/p}&\le (1+|z|\eta^{-1})^{(2d-1)2/p} \le 2^{(2d-1)2/p} \big(1+(|z|\eta^{-1})^{(2d-1)2/p}\big)
	\\ &\le 2^{(2d-1)2/p} \big(1+(\eta^{-1}(1+|z|))^{(2d-1)2/p}\big),
\end{align*}
and therefore,
\begin{align}\label{eq_mul}
	\begin{multlined}[\linewidth]
	 \left\|(1+|z|\eta^{-1})^{(2d-1)(2-p)/2p}(1+|z|)^{(1+\alpha)(2-p)/2p+1/2}V_g g\right\|_2^{p} 
\\ 
\hfill \begin{aligned}
=		&\left\|(1+|z|\eta^{-1})^{(2d-1)(2-p)/2p}(1+|z|)^{(1+\alpha)/p-\alpha/2}V_g g\right\|_2^{p} 
\\ \lesssim &\left(\int_{\R^{2d}}(1+|z|)^{(1+\alpha)2/p-\alpha}|V_g g(z)|^2 dz +
	\frac{1}{\eta^{(2d-1)2/p}} \int_{\R^{2d}}(1+|z|)^{(2d+\alpha)2/p-\alpha} |V_g g(z)|^2 dz\right)^{p/2} . 
	\end{aligned}	
	\end{multlined}		
\end{align}
Regarding the first integral we have
\begin{align}\label{depe2}
\begin{aligned}
	\int_{\R^{2d}}(1+|z|)^{(1+\alpha)2/p-\alpha}|V_g g(z)|^2 \, dz  &= \int_{\R^{2d}} \frac{\left((1+|z|)^{(1+\alpha)/p +d}| V_g g (z)|\right)^2}{(1+|z|)^{2d+\alpha}} \, dz
	\\ &\le \left\|(1+|z|)^{-(2d+\alpha)}\right\|_1 \left\|(1+|z|)^{(1+\alpha)/p +d} V_g g \right\|_\infty^2  
	\\ & \lesssim \frac{1}{\alpha }  \left\|(1+|z|)^{(1+\alpha)/p +d} V_g g \right\|_\infty^2 .
	\end{aligned}
\end{align}

By \eqref{eq_gs}, we get
\begin{align} \label{eq_ctes2}
\begin{aligned}
	\left\|(1+|z|)^{(1+\alpha)/p +d} V_g g \right\|_\infty^2
	&\le C_g^2 A^{2\ceil*{(1+\alpha)/p+d}} (\ceil*{(1+\alpha)/p+d})!^{2\beta }
	\\ &\le C_g^2 A^{2(1+\alpha)/p +2d+2} \left(\frac{1+\alpha+p(d+1)}{p}\right)^{2\beta ((1+\alpha)/p+d+1)}
	\\ & \le C_g^{2} A^{4/p+2d+2} \left(d +3\right)^{2\beta (2/p +d+1)}  p^{-2\beta(d+1)}   p^{-2\beta(1+\alpha)/p} .
	\end{aligned}
\end{align}

Similarly, for the second integral in the right-hand side of \eqref{eq_mul},
\begin{align}\label{depe}
\begin{aligned}
\int_{\R^{2d}}(1+|z|)^{(2d+\alpha)2/p-\alpha} |V_g g(z)|^2 \, dz &= \int_{\R^{2d}} \frac{\left((1+|z|)^{(2d+\alpha)/p +d}| V_g g (z)|\right)^2}{(1+|z|)^{2d+\alpha}} \, dz
	\\ &\le \left\|(1+|z|)^{-(2d+\alpha)}\right\|_1 \left\|(1+|z|)^{(2d+\alpha)/p +d} V_g g \right\|_\infty^2 
\\ & \lesssim \frac{1}{\alpha }  \left\|(1+|z|)^{(2d+\alpha)/p +d} V_g g \right\|_\infty^2 .
\end{aligned}
\end{align}
Again by \eqref{eq_gs}, we get
\begin{align} \label{eq_ctes}
\left\|(1+|z|)^{(2d+\alpha)/p +d} V_g g \right\|_\infty^2
&\le C_g^2 A^{2\ceil*{(2d+\alpha)/p+d}} (\ceil*{(2d+\alpha)/p+d})!^{2\beta }
\\ &\le C_g^2 A^{2(2d+\alpha)/p +2d+2} \left(\frac{2d+\alpha+p(d+1)}{p}\right)^{2\beta ((2d+\alpha)/p+d+1)} \notag
\\ & \le C_g^{2} A^{2(2d+1)/p+2d+2} \left(3d +2\right)^{2\beta ((2d+1)/p +d+1)}  p^{-2\beta(d+1)}   p^{-2\beta(2d+\alpha)/p} . \notag
\end{align}
Joining \eqref{eq_mul}, \eqref{depe2}, \eqref{eq_ctes2}, \eqref{depe} and \eqref{eq_ctes} we obtain
\begin{align}\label{eq_mul2}
\begin{multlined}[\linewidth]
	\big\|(1+|z|\eta^{-1})^{(2d-1)(2-p)/2p} (1+|z|)^{(1+\alpha)(2-p)/2p+1/2}V_g g\big\|_2^{p} 
\\	
\begin{aligned}
 &\lesssim \alpha^{-p/2} C_g^{p} A^{2d+1+(d+1)p} \left(3d +2\right)^{\beta (2d+1 +(d+1)p)} p^{-p\beta(d+1)} p^{-\beta(1+\alpha)} \big(1+(\eta p^\beta)^{-(2d-1)2/p}\big)^{p/2} 
\\	& \lesssim \alpha^{-p/2} C_g^{1/2} A^{3d+2} \left(3d +2\right)^{\beta (3d+2)} e^{\beta(d+1)} p^{-\beta(1+\alpha)} \big(1+(\eta p^\beta)^{-(2d-1)}\big) 
\\	& \lesssim \alpha^{-p/2} C_g^{1/2} A^{3d+2} C_d^\beta p^{-\beta(1+\alpha)} \big(1+(\eta p^\beta)^{-(2d-1)}\big),
\end{aligned}
\end{multlined}
\end{align}
where we used the elementary bound $p^{-p} \leq e$.
($C_d$ always denotes a constant that depends only on $d$; its particular value may change from line to line.)
Combining this with Proposition~\ref{prop1} and \eqref{eq_kappa_2}, we get
\begin{align}\label{eq_H}
\begin{aligned}
\|H\|_p^p  
&\lesssim \frac{ \big\|(1+|z|\eta^{-1})^{(2d-1)(2-p)/2p} (1+|z|)^{(1+\alpha)(2-p)/2p+1/2}V_g g\big\|_2^{p}}{(\cu \cdot \alpha)^{1-p/2}} \cdot \mathcal{H}^{2d-1}(\partial \Omega) 
\\ &\lesssim \frac{C_g^{1/2} A^{3d+2} C_d^\beta }{\cu  } \cdot
\frac{1+(\eta p^\beta)^{-(2d-1)}}{ \alpha p^{\beta(1+\alpha)} } \cdot \mathcal{H}^{2d-1}(\partial \Omega).
\end{aligned}
\end{align}

We now choose $\alpha=1/(\log(1/p+1))$ and assume for the moment that this choice is indeed compatible with the assumption $\alpha\le 1$, to obtain
\begin{align*}
\|H\|_p^p     &\lesssim  \frac{C_g^{1/2} A^{3d+2} C_d^\beta e^\beta}{\cu } \cdot
\frac{\big(1+(\eta p^\beta)^{-(2d-1)}\big)\log(1/p+1)}{ p^{\beta}} \cdot \mathcal{H}^{2d-1}(\partial \Omega). \notag
\end{align*}
This together with \eqref{tau} gives
\[\big|\#\{\lambda \in \sigma(\lo_\Omega): \ \lambda>\delta\}-|\Omega|\big| 
\lesssim  \frac{C_g^{1/2} A^{3d+2} C_d^\beta}{\cu }\cdot
\frac{\tau^{p/2} \big(1+(\eta p^\beta)^{-(2d-1)}\big) \log(1/p+1)}{ p^{\beta}} \cdot \mathcal{H}^{2d-1}(\partial \Omega).\]
Taking $p=1/\log \tau$ yields \eqref{eq_t1}, provided that this choice indeed leads to $p \leq 1/2$ and $\alpha\le 1$.

To complete the proof, we first observe that, if $\tau \ge e^2$, then, indeed,
\begin{align*}
p := \frac{1}{\log \tau} \le \frac{1}{2}, 
\quad \text{and} \quad
\alpha :=\frac{1}{\log \big((1/p)+1\big)} \leq \frac{1}{\log (2+1)} \le  1,
\end{align*}
are valid choices for $p$ and $\alpha$.
On the other hand, if $\tau \le e^2$ we choose $p=1/2$ and $\alpha=1$ in \eqref{eq_H} to get
\begin{align}\label{eq_mmm}
\begin{aligned}
\|H\|_p^p     &\lesssim  \frac{C_g^{1/2} A^{3d+2} C_d^\beta}{\cu }  \cdot  2^{2\beta}\big(1+2^{\beta(2d-1)}\eta^{-(2d-1)}\big) \cdot \mathcal{H}^{2d-1}(\partial \Omega) 
\\ &\lesssim  \frac{C_g^{1/2} A^{3d+2} C_d^\beta}{\cu }  \cdot  \big(1+\eta^{-(2d-1)}\big) \cdot \mathcal{H}^{2d-1}(\partial \Omega).
\end{aligned}
\end{align}
Note that $\tau \geq 2$, as either $\delta \leq 1/2$ or $1-\delta \leq 1/2$. Combining this observation, the assumption $\tau \leq e^2$, \eqref{tau} and \eqref{eq_mmm}, we conclude
\begin{align*}
\big|\#&\{\lambda \in \sigma(\lo_\Omega): \ \lambda>\delta\}-|\Omega|\big| 
\lesssim \frac{C_g^{1/2} A^{3d+2} C_d^\beta}{\cu }  \cdot \big(1+\eta^{-(2d-1)}\big) \cdot \tau^{1/4} \cdot \mathcal{H}^{2d-1}(\partial \Omega)
\\ &\le  \frac{C_g^{1/2} A^{3d+2} C_d^\beta}{\cu } \cdot \Big(1+\Big(\frac{\log \tau}{\log 2 }\Big)^{\beta(2d-1)}\eta^{-(2d-1)}\Big) \cdot \frac{\sqrt{e} (\log \tau)^{\beta } 	\log\big[\log(\tau)+1\big]}{(\log 2)^{\beta} 	\log\big[\log(2)+1\big]} \cdot \mathcal{H}^{2d-1}(\partial \Omega) 
\\ &\le  \frac{C_g^{1/2} A^{3d+2} C_d^\beta}{\cu } \cdot \Big(1+\Big(\frac{(\log \tau)^\beta}{\eta }\Big)^{(2d-1)}\Big)  (\log \tau)^{\beta } 	\log\big[\log(\tau)+1\big] \cdot \mathcal{H}^{2d-1}(\partial \Omega) .
\end{align*}
Hence, we have proved \eqref{eq_t1} for all possible values of $\tau$.
\end{proof}
\begin{rem}
The constant $C'_g$ in \eqref{eq_t1} can be taken as
\[C'_g = C_d \cdot C_g^{1/2} \cdot A^{3d+2} \cdot e^{\beta(d+2)} \cdot (3d+2)^{\beta(3d+2)} \cdot 2^{\beta (2d+1)} \cdot (\log 2)^{-2 d \beta} .\]
In fact, assuming that $A\ge 1$, a close inspection of \eqref{eq_ctes2}$-$\eqref{eq_mul2} 
shows that for sufficiently large $\tau$, \eqref{eq_t1} holds with $C'_{g}=C_d  \cdot A^{2d} \cdot e^{\beta} \cdot (2d)^{2d\beta}$.
\end{rem}

Finally, we prove our second main result.

\begin{proof}[Proof of Theorem \ref{th_2}]
Fix $\alpha >0$ and set
\[p=2\frac{2d+\alpha}{2d+\alpha+s-1}, \qquad \text{so that} \qquad s=1+\frac{(2d+\alpha)(2-p)}{p}.\]
Note that $p\le 2$ since $s-1\ge 0$.
Applying Proposition \ref{prop1} we obtain
\begin{align*}
    \|H\|_p^p  
    &\lesssim \mathcal{H}^{2d-1}(\partial \Omega) \cdot \frac{ \left\|(1+|z|)^{(2d+\alpha)(2-p)/2p+1/2}V_g g\right\|_2^{p}}{(\cu \cdot \eta^{2d-1} \cdot \alpha)^{1-p/2}}
   = \mathcal{H}^{2d-1}(\partial \Omega) \cdot \frac{C_g^{p/2}}{(\cu \cdot \eta^{2d-1} \cdot \alpha)^{1-p/2}}.
\end{align*}
As in the proof of Theorem \ref{th_intro_1}, by Lemma
\ref{lemaautoval}, we derive \eqref{tau}. In combination with the previous estimate this leads to the bound
\begin{align}\label{eq_choclo}
   \big|\#\{\lambda \in \sigma(\lo_\Omega): \lambda>\delta\}-|\Omega|\big|
    \lesssim \mathcal{H}^{2d-1}(\partial \Omega)\cdot    \frac{(C_g\tau)^{p/2} }{ (\cu \cdot \eta^{2d-1} \cdot \alpha)^{1-p/2}}.
\end{align}
We now choose $\alpha$ so that the above expression is small.
Note that $C_g\tau\ge 2$ since $\|V_gg\|_2=1$ and $\tau=\max\{\delta^{-1},(1-\delta)^{-1}\}\ge 2$. Therefore, we can choose 
\[\alpha=\frac{\log 2}{\log (C_g\tau)}\le 1.\]
Let us now estimate \eqref{eq_choclo} for this choice of $\alpha$.
First note that
\begin{align*}
\frac{p}{2}&=\frac{2d+\alpha}{2d+\alpha + s-1 }
=\frac{2d}{2d+s-1 + \alpha}+\frac{\alpha}{2d+s-1 +\alpha}
\le \frac{2d}{2d+s-1} + \frac{\log 2}{\log(C_g\tau)},
\end{align*}
and, consequently,
\begin{align}
\label{eq_taup2}
(C_g \tau)^{p/2}\leq 2 (C_g \tau)^{2d/{(2d+s-1)}}.
\end{align}
In addition, we have,
\begin{align*}
1-\frac{p}{2}=\frac{s-1}{2d+s-1+\alpha}\le \frac{s-1}{2d+s-1} \le 1.
\end{align*}
Hence, using \eqref{eq_kappa_2}, we obtain:
\begin{align}
\label{eq_cte}
\begin{aligned}
\bullet \ &(\cu \cdot \eta^{2d-1})^{1-p/2}\gtrsim  (\cu \cdot \eta^{2d-1})^{(s-1)/(2d+s-1)} ; 
\\ \bullet \ &\left(\frac{\log (C_g \tau)}{\log 2}\right)^{1-p/2}\le \left(\frac{\log (C_g \tau)}{\log 2}\right)^{(s-1)/(2d+s-1)}
\le \frac{(\log (C_g \tau))^{(s-1)/(2d+s-1)}}{\log 2}.
\end{aligned}
\end{align}
Finally, \eqref{eq_tpoli} follows by applying the estimates \eqref{eq_taup2} and \eqref{eq_cte} to  \eqref{eq_choclo}.
\end{proof}

\subsection{Asymptotics for eigenvalues}\label{sec_eigasy}

We now apply Theorems \ref{th_intro_1} and \ref{th_2} to derive asymptotic estimates for the individual eigenvalues of concentration operators with compact time-frequency domains. This is a standard procedure, which we present in detail for completeness.

We order the eigenvalues
of $\lo_{ \Omega}$ in decreasing order. More precisely, we define
\[\lambda_k=\inf\{\|\lo_\Omega-S\| : \ S\in \mathcal{L}(L^2(\R^d)),\,\dim(\im S)<k\},
\qquad  k \geq 1.
\]
As $\lo_{ \Omega}$ is compact, $\{ \lambda_k: k \geq 1 \} \setminus \{0\} = \sigma(\lo_{ \Omega}) \setminus \{0\}$ as sets with multiplicities --- see, e.g., \cite[Lemma 4.3]{dj}.

As a first step towards eigenvalue asymptotics, we estimate the index $k$ where the eigenvalue $\lambda_k$ crosses the threshold $1/2$. Theorems ~\ref{th_intro_1} and~\ref{th_2} suggest that this occurs near the index $A_\Omega = \lceil |\Omega| \rceil$ -- that is, the smallest integer $\geq \abs{\Omega}$.

\begin{lemma}\label{lemplunge}
Let $\Omega\subseteq \R^{2d}$ be a compact set, $g\in L^2(\R^d)$ with $\|g\|_2=1$, and denote
\begin{align}\label{eq_kg}
\newkg = 2 \int_{\mathbb{R}^{2d}} |z| |V_gg(z)|^2 \, dz.
\end{align}
The following statements hold:
\begin{enumerate}[label=(\roman*),ref=(\roman*)]
	\item\label{lemplungei} $\lambda_k\le 1/2$ for every $k\ge A_\Omega + \newkg \cdot \mathcal{H}^{2d-1}(\partial \Omega),$
	\item\label{lemplungeii} $\lambda_k\ge 1/2$ for every $1\le k \le A_\Omega - \newkg \cdot \mathcal{H}^{2d-1}(\partial \Omega).$
\end{enumerate}

\end{lemma}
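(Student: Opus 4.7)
The plan is to reduce both claims to the spectral deviation bound \eqref{eq_simple} evaluated at the symmetric threshold $\delta = 1/2$. At this threshold $\max\{1/\delta, 1/(1-\delta)\} = 2$, so \eqref{eq_simple} together with the definition \eqref{eq_kg} of $\newkg$ gives
\begin{align*}
\big| N - |\Omega| \big| \;\leq\; \newkg \cdot \mathcal{H}^{2d-1}(\partial \Omega), \qquad N := \#\{ \lambda \in \sigma(\lo_\Omega): \lambda > 1/2 \}.
\end{align*}
This is the only analytic input; the rest is bookkeeping that turns a bound on the integer-valued counting function $N$ into bounds on individual entries of the decreasing rearrangement $\{\lambda_k\}_k$.

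The bridge is the elementary equivalence $\lambda_k > 1/2 \iff k \leq N$, which follows from the monotonicity of $\{\lambda_k\}_k$ together with the fact that the nonzero eigenvalues of the compact operator $\lo_\Omega$ exhaust its nonzero spectrum with multiplicities. For part \ref{lemplungei}, I take $k \geq A_\Omega + \newkg \cdot \mathcal{H}^{2d-1}(\partial \Omega)$, use $A_\Omega \geq |\Omega|$, and chain with the deviation bound to get $k \geq |\Omega| + \newkg \cdot \mathcal{H}^{2d-1}(\partial \Omega) \geq N$; a short integer-rounding step, exploiting that $A_\Omega > |\Omega|$ in the generic case, upgrades this to $k > N$ and hence $\lambda_k \leq 1/2$. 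For part \ref{lemplungeii}, I take $k \leq A_\Omega - \newkg \cdot \mathcal{H}^{2d-1}(\partial \Omega)$, use $A_\Omega \leq |\Omega| + 1$ to obtain $k - 1 \leq |\Omega| - \newkg \cdot \mathcal{H}^{2d-1}(\partial \Omega) \leq N$, and conclude $k \leq N+1$; integrality again sharpens this to $k \leq N$, hence $\lambda_k > 1/2 \geq 1/2$.

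I expect no serious obstacle. The only mildly delicate point is reconciling the integer-valued counts $k$ and $N$ with the real-valued threshold $\newkg \cdot \mathcal{H}^{2d-1}(\partial \Omega)$ and the definition $A_\Omega = \lceil |\Omega| \rceil$; this is dispatched by a short case distinction on whether $|\Omega|$ is an integer. No tool beyond \eqref{eq_simple} and the variational definition of $\lambda_k$ is required.
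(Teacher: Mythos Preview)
Your approach is genuinely different from the paper's and worth comparing. The paper never invokes \eqref{eq_simple}. Instead it bounds the variance directly,
\[
\tr(\lo_\Omega)-\tr(\lo_\Omega^2)\le \tfrac12\,\newkg\,\mathcal{H}^{2d-1}(\partial\Omega),
\]
via Lemma~\ref{lemma_var}, and then uses a rearrangement argument (splitting $\sum_n\lambda_n(1-\lambda_n)$ at $n=A_\Omega$) to deduce the tail bounds $\sum_{n>A_\Omega}\lambda_n\le\tfrac12\newkg\,\mathcal{H}^{2d-1}(\partial\Omega)$ and $\sum_{n<A_\Omega}(1-\lambda_n)\le\tfrac12\newkg\,\mathcal{H}^{2d-1}(\partial\Omega)$. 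A Chebyshev step then gives $\lambda_k\le 1/2$ (resp.\ $\ge 1/2$) with no integer-rounding issues, because the factor $\tfrac12$ on the right absorbs the threshold. Your route via \eqref{eq_simple} is slicker conceptually---one call to a known estimate plus bookkeeping---but loses exactly this factor $\tfrac12$, which is what creates your boundary cases.

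There is a small but genuine gap in your handling of \ref{lemplungei}. When $|\Omega|\in\mathbb{Z}$ you have $A_\Omega=|\Omega|$, so your chain only gives $k\ge|\Omega|+\newkg\,\mathcal{H}^{2d-1}(\partial\Omega)\ge N$, and you need strict inequality $k>N$ to conclude $\lambda_k\le 1/2$. No ``integer-rounding step'' or case split on $|\Omega|\in\mathbb{Z}$ alone closes this: if additionally $\newkg\,\mathcal{H}^{2d-1}(\partial\Omega)\in\mathbb{Z}$ and \eqref{eq_simple} holds with equality, then $k=N$ is possible and $\lambda_N>1/2$. This pathological case can be excluded (equality in \eqref{eq_simple} at $\delta=1/2$ forces all eigenvalues into $\{0,\tfrac12,1\}$, whence $N\le|\Omega|$), but that requires reopening the proof of \eqref{eq_simple}, which you treat as a black box. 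For \ref{lemplungeii} your argument does go through, provided you use the strict inequality $A_\Omega-1<|\Omega|$ (always true for $\lceil\cdot\rceil$) rather than the non-strict $A_\Omega\le|\Omega|+1$ you wrote; then $k-1<|\Omega|-\newkg\,\mathcal{H}^{2d-1}(\partial\Omega)\le N$ gives $k\le N$ by integrality.
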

\begin{proof}
Without loss of generality we can assume that $0<\newkg \cdot \mathcal{H}^{2d-1}(\partial \Omega)<\infty$. Indeed, if $\newkg \cdot \mathcal{H}^{2d-1}(\partial \Omega)=\infty$, then the statements are vacuous. On the other hand, $\newkg>0$ as $\|V_g g\|_2=\|g\|_2=1$, while, by the isoperimetric inequality, $\mathcal{H}^{2d-1}(\partial \Omega)=0$ can only occur if $|\Omega|=0$ and $A_\Omega=0$ (see, e.g., \cite[Section 5.2.6, Theorem 2]{evga92}), in which case the conclusions hold trivially. Thus, we assume that $0<\newkg \cdot \mathcal{H}^{2d-1}(\partial \Omega)<\infty$.

By~\eqref{eq_traces} and Lemma \ref{lemma_var},
\begin{align}\label{eq_traz}
\begin{aligned}
0\le \tr(\lo_\Omega)-\tr(\lo_\Omega^2) &=\int_\Omega 1 - \left(1_\Omega*|V_g g|^2\right)(z) \, dz
\\ &\le \left\|1_\Omega-1_\Omega *|V_g g|^2\right\|_1\le \frac12 \cdot \newkg \cdot \mathcal{H}^{2d-1}(\partial \Omega).
\end{aligned}
\end{align}
Second, we proceed as in the proof of \cite[Theorem~1.5]{AbPeRo2}. We have
\begin{align*}
\tr(\lo_\Omega)-\tr(\lo_\Omega^2)&= \sum_{n=1}^\infty \lambda_n(1-\lambda_n)
=\sum_{n=1}^{A_\Omega} \lambda_n(1-\lambda_n) +\sum_{n=A_\Omega+1}^\infty \lambda_n(1-\lambda_n)
\\ &\ge \lambda_{A_\Omega} \sum_{n=1}^{A_\Omega} (1-\lambda_n) +(1-\lambda_{A_\Omega})\sum_{n=A_\Omega+1}^\infty \lambda_n
\\ &=\lambda_{A_\Omega} A_\Omega  - \lambda_{A_\Omega} \sum_{n=1}^{A_\Omega} \lambda_n + (1-\lambda_{A_\Omega}) \Big(|\Omega|-\sum_{n=1}^{A_\Omega} \lambda_n\Big)
\\ &= \lambda_{A_\Omega} A_\Omega  -  \sum_{n=1}^{A_\Omega} \lambda_n + (1-\lambda_{A_\Omega}) |\Omega|
= |\Omega|-  \sum_{n=1}^{A_\Omega} \lambda_n + \lambda_{A_\Omega}(A_\Omega -  |\Omega|).
\end{align*}
Combining this with \eqref{eq_traz} we obtain
\[|\Omega|-\sum_{n=1}^{A_\Omega}\lambda_n+\lambda_{A_\Omega}(A_\Omega-|\Omega |)\le \mathrm{trace}(\lo_\Omega)-\mathrm{trace}(\lo_\Omega^2)\le \frac12 \cdot \newkg \cdot \mathcal{H}^{2d-1}(\partial \Omega). \]
As a consequence,
\begin{align}
\sum_{n=A_\Omega+1}^\infty \lambda_n &=|\Omega|-\sum_{n=1}^{A_\Omega}\lambda_n \le  \frac12 \cdot \newkg \cdot \mathcal{H}^{2d-1}(\partial \Omega), \qquad\text{and} \label{eqaom1}
\\ \sum_{n=1}^{A_\Omega-1}(1-\lambda_n) &=|\Omega|-\sum_{n=1}^{A_\Omega}\lambda_n +\lambda_{A_\Omega}(A_\Omega-|\Omega |) -(1+|\Omega|-A_\Omega)(1-\lambda_{A_\Omega}) \label{eqaom2}
\\ &\le  \frac12 \cdot \newkg \cdot \mathcal{H}^{2d-1}(\partial \Omega).\notag
\end{align}
To prove \ref{lemplungei}, let $k \geq A_\Omega + \newkg \cdot \mathcal{H}^{2d-1}(\partial \Omega)$.
We can write
\begin{align*}
k = A_\Omega + j,
\end{align*}
with $j \geq  \newkg \mathcal{H}^{2d-1}(\partial \Omega)$, and use \eqref{eqaom1} to estimate
\begin{align*}
\newkg \cdot \mathcal{H}^{2d-1}(\partial \Omega)
\cdot \lambda_{k} &\leq j \cdot \lambda_{A_\Omega+j}
\leq \sum_{n=A_\Omega+1}^\infty \lambda_n \leq \frac12 \cdot \newkg \cdot \mathcal{H}^{2d-1}(\partial \Omega).
\end{align*}
Since $0<\newkg \cdot \mathcal{H}^{2d-1}(\partial \Omega)<\infty$, it follows that $\lambda_k \leq 1/2$, as claimed.

We proceed similarly to prove \ref{lemplungeii}. Let $1\le k \le A_\Omega - \newkg \cdot \mathcal{H}^{2d-1}(\partial \Omega)$. 
We write
\begin{align*}
k=A_\Omega - j,
\end{align*}
with $j \geq  \newkg \mathcal{H}^{2d-1}(\partial \Omega)$, and use \eqref{eqaom2} to estimate
\begin{align*}
\newkg \cdot \mathcal{H}^{2d-1}(\partial \Omega) \cdot
(1-\lambda_k) \leq j (1-\lambda_{A_\Omega-j})
\le \sum_{n=1}^{A_\Omega-1} (1-\lambda_n)
\leq \frac12 \cdot \newkg \cdot \mathcal{H}^{2d-1}(\partial \Omega).
\end{align*}
Again since $0<\newkg \cdot \mathcal{H}^{2d-1}(\partial \Omega)<\infty$, it follows that $\lambda_k \geq 1/2$ as claimed.
\end{proof}

We can now prove eigenvalue asymptotics in the context to Theorem \ref{th_intro_1}.

\begin{proof}[Proof of Corollary \ref{coro_1}]
A direct computation shows that
\begin{align}\label{eq_aom}
\newkg \le C_g',
\end{align}
where $\newkg$ is the magnitude defined in \eqref{eq_kg}, and the dimensional constant $C_d$ is large enough.

To prove \ref{coro_1i}, let $k=A_\Omega+\gamma h\in\N$ with $h\ge 1$. By \eqref{eq_aom},
\begin{align*}
A_\Omega + \newkg \cdot \mathcal{H}^{2d-1}(\partial \Omega)\le A_\Omega+ \gamma \le k.
\end{align*}
Thus, by Lemma~\ref{lemplunge}, $\lambda_k\le 1/2$. We may assume that $\lambda_k>0$, since otherwise \eqref{eqeig1} holds trivially.
Applying Theorem~\ref{th_intro_1} for $0<\delta\nearrow \lambda_k\le 1/2$ and $\eta\le 1$, we obtain
\begin{align*}
k-A_\Omega &\le \lim_{\delta\nearrow \lambda_k} \big| \#\sett{\lambda \in \sigma(\lo_{ \Omega}): \lambda > \delta} - \abs{\Omega} \big|
\le \gamma  \lim_{\delta\nearrow \lambda_k} (\log (1/\delta))^{2d\beta}	\log\big[\log(1/\delta)+1\big]
\\ & = \gamma  (\log (1/\lambda_k))^{2d\beta}	\log\big[\log(1/\lambda_k)+1\big] \le \frac{\gamma}{\varepsilon}  (\log (1/\lambda_k))^{2d\beta +\varepsilon},
\end{align*}
for every $0<\varepsilon\le 1$, where we used that $\log(x+1)\le x^\varepsilon/\varepsilon$ for $x\ge0$. Rearranging the last expression we arrive at
\begin{align}\label{eqh}
(\varepsilon h)^{1/(2d\beta+\varepsilon)}\le \log(1/\lambda_k).
\end{align}
(As in the proof of Lemma \ref{lemplunge}, we assume without loss of generality that $\mathcal{H}^{2d-1}(\partial \Omega)>0$, since, otherwise, the conclusions hold trivially.) Choosing $\varepsilon=1/(1+\log h)\le 1$,
\begin{align*}
\Big(\frac{h}{1+\log h}\Big)^{1/(2d\beta)}&=(\varepsilon h)^{1/(2d\beta+\varepsilon)}(\varepsilon h)^{1/(2d\beta)-1/(2d\beta+\varepsilon)}
=(\varepsilon h)^{1/(2d\beta+\varepsilon)}(\varepsilon h)^{\varepsilon/(2d\beta)(2d\beta+\varepsilon)}
\\ & \le (\varepsilon h)^{1/(2d\beta+\varepsilon)}h^{1/(1+\log h)(2d\beta)^2}
\le (\varepsilon h)^{1/(2d\beta+\varepsilon)}e^{1/(2d\beta)}.
\end{align*}
Combining this with \eqref{eqh} yields
\begin{align*}
\left(\frac{h}{e(1+\log h)}\right)^{1/(2d\beta)}&\le \log(1/\lambda_k),
\\ e^{\left(\frac{h}{e(1+\log h)}\right)^{1/(2d\beta)}} &\le  \frac{1}{\lambda_k},
\end{align*}
and \eqref{eqeig1} follows. 

Towards \ref{coro_1ii}, let $k=A_\Omega-\gamma h\in\N$ with $h\ge 1$.
By \eqref{eq_aom},
\[k= A_\Omega - \gamma h \le A_\Omega- \gamma \le A_\Omega -\newkg \cdot \mathcal{H}^{2d-1}(\partial \Omega).\]
From Lemma~\ref{lemplunge} it follows that $\lambda_k\ge 1/2$. 
Applying Theorem~\ref{th_intro_1}, we obtain
\begin{align*}
A_\Omega-k &\le |\Omega| - (k-1) \le \big|  \abs{\Omega} - \#\sett{\lambda \in \sigma(\lo_{ \Omega}): \lambda > \lambda_k} \big|
\\ &\le \gamma    (\log (1/(1-\lambda_k)))^{2d\beta}	\log\big[\log(1/(1-\lambda_k))+1\big] 
\\ &\le \frac{\gamma}{\varepsilon}  (\log (1/(1-\lambda_k)))^{2d\beta +\varepsilon}, \qquad 0<\varepsilon\le 1.
\end{align*}
Proceeding exactly as before we arrive at
\begin{align*}
\Big(\frac{h}{e(1+\log h)}\Big)^{1/(2d\beta)}&\le \log(1/(1-\lambda_k))
\\ e^{\left(\frac{h}{e(1+\log h)}\right)^{1/(2d\beta)}} &\le  \frac{1}{1-\lambda_k},
\end{align*}
which proves \eqref{eqeig2}.
\end{proof}

We now prove an analogous result concerning windows with polynomial time-frequency decay.

\begin{proof}[Proof of Corollary \ref{coro_2}]
Recall that the constant $C_g'$ in \eqref{eq_gamma} is given by $C_g'=C_d \cdot C_g^{\frac{2d}{2d+s-1}}$ (see Theorem~\ref{th_2}). We assume, as we may, that $C_d$  satisfies $C_d \geq 2$.
Second, since $s\ge 1$ we have that $2d\leq 2d+s-1\le 2ds$.
As $||V_gg||_2=1$, Jensen's inequality gives
the following bound on the quantity in \eqref{eq_kg}:
\begin{align*}
 \newkg = 2
\int_{\mathbb{R}^{2d}} |z| |V_g g(z)|^2 \,dz & \le 2 \int_{\mathbb{R}^{2d}} (1+|z|) |V_g g(z)|^2 \,dz
\\ &\le 2 \left(\int_{\mathbb{R}^{2d}} (1+|z|)^{\frac{2d+s-1}{2d}} |V_g g(z)|^2 \,dz\right)^{\frac{2d}{2d+s-1}}
\\ &\le 2 \left(\int_{\mathbb{R}^{2d}} (1+|z|)^{s} |V_g g(z)|^2 \,dz\right)^{\frac{2d}{2d+s-1}}
\\ &= 2 \cdot C_g^{\frac{2d}{2d+s-1}}\le C_g'.
\end{align*}
So we conclude:
\begin{align}\label{eq_aom2}
\newkg \cdot \mathcal{H}^{2d-1}(\partial \Omega)\le C_g' \cdot \mathcal{H}^{2d-1}(\partial \Omega) \le \gamma \le \gamma h,
\qquad \mbox{for } h \geq 1.
\end{align}

To prove \ref{coro_2i}, we let $k=A_\Omega+\gamma h\in\N$ with $h\ge 1$ and use \eqref{eq_aom2} to estimate
\[A_\Omega + \newkg \cdot \mathcal{H}^{2d-1}(\partial \Omega)\le A_\Omega+ \gamma  h= k.\]
Thus, by Lemma~\ref{lemplunge}, $\lambda_k\le 1/2$. As before, we may assume that $\lambda_k>0$, since otherwise \eqref{eqeig3} holds trivially.
Applying Theorem~\ref{th_2} for $0<\delta\nearrow \lambda_k\le 1/2$ and denoting $\theta=2d/(2d+s-1)$, we obtain
\begin{align*}
k-A_\Omega &\le \lim_{\delta\nearrow \lambda_k} \big| \#\sett{\lambda \in \sigma(\lo_{ \Omega}): \lambda > \delta} - \abs{\Omega} \big|
\le \gamma  \lim_{\delta\nearrow \lambda_k} \delta^{-\theta}	(\log(C_g/\delta))^{1-\theta}
\\ & = \gamma \lambda_k^{-\theta}	(\log(C_g/\lambda_k))^{1-\theta}  \le \frac{\gamma}{\varepsilon^{1-\theta}C_g^\theta}  (C_g/\lambda_k)^{\theta+\varepsilon(1-\theta)},
\end{align*}
for every $\varepsilon>0$, where we used that $\log(x)\le x^\varepsilon/\varepsilon$ for $x>0$. Rearranging the last expression we arrive at
\begin{align}\label{eqh2}
\left(\varepsilon^{1-\theta} C_g^\theta h\right)^{1/(\theta+\varepsilon(1-\theta))}\le \frac{C_g}{\lambda_k}.
\end{align}
(As in the proof of Lemma \ref{lemplunge}, we assume without loss of generality that $\mathcal{H}^{2d-1}(\partial \Omega)>0$, since, otherwise, the conclusions hold trivially.) Choosing $\varepsilon=1/(1+\log (C_g h))\le 1$,
\begin{align*}
C_g \Big(\frac{ h}{(1+\log (C_g h))^{1-\theta}}\Big)^{1/\theta}&=\left(\varepsilon^{1-\theta}C_g^\theta h\right)^{1/(\theta+\varepsilon(1-\theta))} 
\left(\varepsilon^{1-\theta} C_g^\theta h\right)^{\varepsilon(1-\theta)/\theta(\theta+\varepsilon(1-\theta))}
\\&\le \left(\varepsilon^{1-\theta} C_g^\theta h\right)^{1/(\theta+\varepsilon(1-\theta))} 
(C_g h)^{(1-\theta)/(1+\log (C_g h))\theta^2}
\\ &\le \left(\varepsilon^{1-\theta} C_g^\theta h\right)^{1/(\theta+\varepsilon(1-\theta))}  e^{(1-\theta)/\theta^2}
\\ &\le \left(\varepsilon^{1-\theta} C_g^\theta h\right)^{1/(\theta+\varepsilon(1-\theta))}  e^{ 1/\theta^2}.
\end{align*}
Combining the last estimate with \eqref{eqh2} yields \eqref{eqeig3}. 

Towards \ref{coro_2ii}, let $k=A_\Omega-\gamma h\in\N$ with $h\ge 1$ and use \eqref{eq_aom2} to estimate
\[k= A_\Omega - \gamma h  \le A_\Omega -\newkg \cdot\mathcal{H}^{2d-1}(\partial \Omega).\]
From Lemma~\ref{lemplunge} it follows that $\lambda_k\ge 1/2$. 
Applying Theorem~\ref{th_2}, we get
\begin{align*}
A_\Omega-k &\le |\Omega|-(k-1)\le  \big|  \abs{\Omega} - \#\sett{\lambda \in \sigma(\lo_{ \Omega}): \lambda > \lambda_k}  \big|
\\ &\le \gamma    (1-\lambda_k)^{-\theta}	(\log(C_g/(1-\lambda_k)))^{1-\theta}
\\ &\le \frac{\gamma}{\varepsilon^{1-\theta}C_g^\theta} (C_g/(1-\lambda_k))^{\theta+\varepsilon(1-\theta)}, \qquad \varepsilon>0.
\end{align*}
Proceeding exactly as before we arrive at
\begin{align*}
\left(\frac{h}{(1+\log (C_g h))^{1-\theta}}\right)^{1/\theta} \le  \frac{ e^{ 1/\theta^2}}{1-\lambda_k},
\end{align*}
which proves \eqref{eqeig4}.
\end{proof}

\subsection{Almost sharpness of the bounds}
\label{sec_disc}

Let us show that the exponents 1 and $2d-1$ of $(\log \tau)^\beta$ in \eqref{eq_t1} can not be improved. To this end we first inspect the special case of $d=1$, $g$ the Gaussian window $g(t)=2^{1/4} e^{-\pi t^2}$, and $\Omega$ the disk 
$\Omega=B_R(0)\subseteq \R^2$. As shown in \cite{da88}, the eigenvalues of $\lo_{ \Omega}$ are
\[\lambda_{k}=\frac{\gamma(k+1,\pi R^2)}{k!},\]
where 
\[\gamma(s,x)=\int_0^x t^{s-1}e^{-t} \, dt,\]
is the lower incomplete gamma function.

Introducing the random variable $X\sim\Gamma(k+1,1)$ we then have
\[\lambda_{k}=\frac{\gamma(k+1,\pi R^2)}{k!}=P(X\le \pi R^2).\]
From \cite[Theorem~5]{ZZ} we see that for $M> 1$ there are constants $a=a(M),b=b(M)>0$ such that for $\pi R^2\le k+1\le M \pi R^2$,
\[ae^{-b\frac{(k+1-\pi R^2)^2}{k+1}}\le \lambda_{k}=P(X\le \pi R^2)\le e^{-\frac{(k+1-\pi R^2)^2}{2(k+1)}}.\]
Hence, Corollary \ref{coro_1} almost recovers the exponential decay in this region.

Furthermore, a straightforward calculation shows that there exist constants $c,C>0$ such that
\begin{align}\label{eq_a}
\#\sett{k \in \N: \lambda_k > \delta}\ge \pi R^2 + c \sqrt{\log(1/\delta)}R, \qquad
C\le \sqrt{\log(1/\delta)}\le R.
\end{align}
On the other hand, for $ R \ge 1$,
\[\lambda_k \ge \int_0^1 \frac{x^k e^{-x}}{k!} \, dx\gtrsim \frac{1}{k+1!}\gtrsim k^{-k}.\]
A simple computation then shows that
\begin{align*}
\#\sett{k \in \N: \lambda_k > \delta}\gtrsim  \frac{\log(1/\delta)}{\log\log(1/\delta)}.
\end{align*}
In particular,
\begin{align}\label{eq_b}
\#\sett{k \in \N: \lambda_k > \delta} -\pi R^2\gtrsim  \frac{\log(1/\delta)}{\log\log(1/\delta)}, \qquad 1\le R=o\left(\Big(\frac{\log(1/\delta)}{\log\log(1/\delta)}\Big)^{1/2}\right).
\end{align}

\emph{In both regimes, the estimate \eqref{eq_a} and \eqref{eq_b} saturate the upper bound \eqref{eq_good} of Corollary~\ref{coro_dil} up to  $\log \log$ factors.}

The analysis extends to higher dimensions. The eigenvalues of $L_\Omega$ for $\Omega=B_R(0)^d\subseteq \R^{2d}$ are
$\mu_k=\prod_{j=1}^{d}\lambda_{k_j}, k\in\N^d$, where $\lambda_{k}$ are the eigenvalues of the one-dimensional case \cite{da88}. Hence, for $C\le \sqrt{\log(1/\delta)/d}\le R$,
\begin{align*}
	\#\sett{k \in \N^d: \mu_k > \delta}
	&\ge \#\sett{k \in \N: \lambda_k > \delta^{1/d}}^d\ge \left(\pi R^2 + c \sqrt{\log(\delta^{-1/d})}R\right)^d
\\ & \ge (\pi R^2)^d+c_d R^{2d-1}\sqrt{\log(\delta^{-1})},
\end{align*}
while, for $1\le  R =o(\sqrt{\log(1/\delta)/\log\log(1/\delta)})$,
\begin{align*}
	\#\sett{k \in \N^d: \mu_k > \delta}-(\pi R^2)^d
	&\ge \#\sett{k \in \N: \lambda_k > \delta^{1/d}}^d-(\pi R^2)^d\gtrsim \Big(\frac{\log(1/\delta)}{\log\log(1/\delta)}\Big)^d.
\end{align*}
As before, the bound in \eqref{eq_good} is attained up to $\log \log$ factors in these two regions. In particular, the exponents 1 and $2d-1$ of $(\log \tau)^\beta$ in \eqref{eq_t1} can not be improved.
The analysis for $\delta$ close to 1 is similar.

\subsection{Modulation spaces, proof of Remark \ref{rem_m1s}}\label{sec_m1s}
The function $g$ belongs to the \emph{modulation space} $M^1_{s}(\mathbb{R}^d)$, $s>0$, if for some (or any) non-zero Schwartz function $\varphi$,
\begin{align*}
\lVert g \rVert_{M^1_{s}} :=
\int_{\mathbb{R}^{2d}} (1+|z|)^{s} |V_\varphi g(z)| \, dz <\infty,
\end{align*}
see, e.g., \cite{benyimodulation}. 
The following folkloric argument allows us to show that the hypothesis of Theorem~\ref{th_2} concerning $g$ is satisfied, whenever $g \in M^1_{s}(\mathbb{R}^d)$ with $s \geq 1$. The change of window formula for the short-time Fourier transform \cite[Lemma 11.3.3]{grbook} implies
\begin{align*}
|V_gg| \leq |V_\varphi g| * |V_g \varphi|.
\end{align*}
Since $|V_gg(z)| \leq \lVert g \rVert_2^2=1$ and $|V_g \varphi(z)|=|V_\varphi g(-z)|$, we conclude that
\begin{align*}
&\int_{\mathbb{R}^{2d}} (1+|z|)^{s} |V_g g(z)|^2 \,dz
\leq \int_{\mathbb{R}^{2d}} (1+|z|)^{s} |V_g g(z)| \,dz
\\
&\qquad\leq 
\int_{\mathbb{R}^{2d}} \int_{\mathbb{R}^{2d}}
(1+|w|)^{s} |V_\varphi g(w)| (1+|z-w|)^{s} |V_\varphi g(w-z)|
\,dw \,dz = \lVert g \rVert_{M^1_{s}}^2 <\infty.
\end{align*}

\bibliographystyle{abbrv}
\bibliography{eigentf}
\end{document}